\newenvironment{lyxlist}[1]
	{\begin{list}{}
		{\settowidth{\labelwidth}{#1}
		 \setlength{\leftmargin}{\labelwidth}
		 \addtolength{\leftmargin}{\labelsep}
		 }}
	{\end{list}}
\begin{document}
\title{\onehalfspacing{}Characterizing the kernel of the Burau representation modulo 2 for
the 4-strand braid group}
\author{Donsung Lee}
\date{September 11, 2023}

\maketitle
\medskip{}

\begin{abstract}
\begin{spacing}{0.9}
\noindent In 1997, Cooper\textendash Long established the nontriviality
of the kernel of the modulo 2 Burau representation for the 4-strand
braid group, $B_{4}$. This paper extends their work by embedding
the image group into a finitely presented group. As an application,
we characterize the kernel as the intersection of $B_{4}$ with a
normal closure of a finite number of elements in an HNN extension
of $B_{4}$.\vspace{1cm}

\noindent \textbf{Keywords:} braid group, Burau representation, HNN
extension.\medskip{}

\noindent \textbf{Mathematics Subject Classification 2020:} 20F05,
20F29, 20F36.
\end{spacing}
\end{abstract}
\noindent $ $\theoremstyle{definition}
\newtheorem{definition}{Definition}[section]
\theoremstyle{remark}
\newtheorem{theorem}{Theorem}[section]
\newtheorem{lemma}[theorem]{Lemma}
\newtheorem{corollary}[theorem]{Corollary}
\newtheorem{remark}[theorem]{Remark}

\newtheorem*{ack}{Acknowledgements}
\newtheorem*{theorem11}{Theorem \textnormal{1.1}}
\newtheorem*{theorem13}{Theorem \textnormal{1.3}}
\newtheorem*{lemma22}{Lemma \textnormal{2.2}}
\newtheorem*{prooflemma21}{Proof of Lemma \textnormal{2.1}}
\newtheorem*{prooflemma22}{Proof of Lemma \textnormal{2.2}}
\newtheorem*{prooftheorem13}{Proof of Theorem \textnormal{1.3}}
\newtheorem*{claim1}{Claim \textnormal{1}}
\newtheorem*{claim2}{Claim \textnormal{2}}
\newtheorem*{claim3}{Claim \textnormal{3}}
\newtheorem*{proofclaim1}{Proof of Claim \textnormal{1}}
\newtheorem*{proofclaim2}{Proof of Claim \textnormal{2}}
\newtheorem*{proofclaim3}{Proof of Claim \textnormal{3}}

\section{Introduction}

The reduced Burau representation $\beta_{n}:B_{n}\to\mathrm{GL}\left(n-1,\,\mathbb{Z}\left[t,t^{-1}\right]\right)$
of the $n$-strand braid group $B_{n}$ is a well-known homological
representation \citep{MR2435235} over the Laurent polynomial ring
of integral coefficients. Its kernel is known to be trivial for $n=2,3$
\citep{MR2435235}, and nontrivial for $n\ge5$ \citep{MR1725480}.
Church and Farb showed in \citep{MR2629766} that $\ker\beta_{n}$
is not finitely generated for $n\ge6$. However, for the 4-strand
braid group $B_{4}$, whether $\ker\beta_{4}$ is trivial or not remains
an open problem.

Until now, much effort has been made to characterize the kernel for
$n\ge4$, or find a subgroup of $B_{n}$ that contains it. We denote
the standard generating set for $B_{n}$ by $\left\{ \sigma_{i}\right\} _{1\le i\le n-1}$.
A classical observation of Gorin\textendash Lin \citep{MR0251712}
is that $\ker\beta_{4}$ is contained in the group normally generated
by the single braid $\sigma_{3}\sigma_{1}^{-1}$. In 2017, Calvez
and Ito deduced in \citep{MR3660095} a Garside-theoretic criterion
for a braid to be in $\ker\beta_{4}$. Recently, Dlugie showed in
\citep{dlugie2022burau} that $\ker\beta_{4}$ is also contained in
the normal closure of $\sigma_{1}^{d}$, for any $d=5,6,$ or $8$.

On the other hand, define $\beta_{4}\otimes\mathbb{F}_{p}:B_{4}\to\mathrm{GL}\left(3,\,\mathbb{F}_{p}\left[t,t^{-1}\right]\right)$
to be the reduced Burau representation modulo $p$ entrywise. Cooper\textendash Long
proved that $\ker\beta_{4}\otimes\mathbb{F}_{p}$ is nontrivial when
$p=2$ \citep{MR1431138} and $p=3$ \citep{MR1668343}. After Cooper\textendash Long,
Lee and Song showed in \citep{MR2175118} that $\ker\beta_{4}\otimes\mathbb{F}_{p}$
consists only of pseudo-Anosov braids.

In this paper, we characterize $\ker\beta_{4}\otimes\mathbb{F}_{2}$
as the normal closure of three elements in an HNN extension of $B_{4}$
(Theorem 1.3). Moreover, we define $\gamma:B_{4}/Z\left(B_{4}\right)\to\mathrm{PGL}\left(3,\,\mathbb{Z}\left[t,t^{-1}\right]\right)$
to be the corresponding homomorphism for $B_{4}/Z\left(B_{4}\right)$,
where $Z\left(B_{4}\right)$ is the center, and then we obtain a parallel
but simpler result; we characterize $\ker\gamma$, as the normal closure
of one element in an HNN extension of $B_{4}/Z\left(B_{4}\right)$
(Theorem 1.1). These theorems are in line with previous results that
have identified subgroups containing $\ker\beta_{4}$ defined as normal
closures. Cooper\textendash Long's presentations \citep{MR1431138}
of the image of $\beta_{4}\otimes\mathbb{F}_{2}$ and $\gamma$ motivated
our results. We combine the results in \citep{MR1431138} with the
elementary group-theoretic methods (the Tietze transformations and
the construction of the HNN extension). A key step is Lemma 2.1 (resp.
Lemma 3.1) in Section 2 (resp. Section 3).

Additionally, one obtains the \emph{integral Burau representation}
by evaluating $t$ in $\beta_{n}$ at $-1$. It is worth mentioning
that Smythe showed in \citep{MR545151} that the kernel of the integral
Burau representation is isomorphic to $\mathbb{Z}\times F_{\infty}$,
where $F_{\infty}$ is the free group of countably infinite rank.
In general, Brendle\textendash Margalit\textendash Putman \citep{MR3323579}
found an explicit generating set for its kernel for each $n$.

To state our results in precise terms, we recall several group-theoretic
facts on the 4-strand braid group $B_{4}$. From its center $Z\left(B_{4}\right)$,
define the quotient group
\begin{align*}
Q_{4} & :=B_{4}/Z\left(B_{4}\right).
\end{align*}

Then, we have the following presentations \citep{MR1431138}:
\begin{align*}
 & B_{4}\cong\left\langle x,y\;|\;1=x^{4}y^{-3}=\left[x^{2},yxy\right]\right\rangle ,\\
 & Q_{4}\cong\left\langle x,y\;|\;1=x^{4}=y^{3}=\left[x^{2},yxy\right]\right\rangle ,
\end{align*}

\noindent where we use the commutator convention $\left[a,b\right]=a^{-1}b^{-1}ab$
and $x$ (resp. $y$) corresponds to an element $\sigma_{1}\sigma_{2}\sigma_{3}$
(resp. $\sigma_{1}\sigma_{2}\sigma_{3}\sigma_{1}$) in $B_{4}$ or
the image in $Q_{4}$. Let $\pi:B_{4}\to Q_{4}$ be the quotient map.

Denote by $B_{4,2}$ the image of the representation $\beta_{4}\otimes\mathbb{F}_{2}:B_{4}\to\mathrm{GL}\left(3,\,\mathbb{F}_{2}\left[t,t^{-1}\right]\right)$.
For the natural surjection $p:\mathrm{GL}\left(3,\,\mathbb{F}_{2}\left[t,t^{-1}\right]\right)\to\mathrm{PGL}\left(3,\,\mathbb{F}_{2}\left[t,t^{-1}\right]\right)$,
we also define $Q_{4,2}$ to be $p\left(B_{4,2}\right)$ in $\mathrm{PGL}\left(3,\,\mathbb{F}_{2}\left[t,t^{-1}\right]\right)$.

The center $Z\left(B_{4}\right)$ of $B_{4}$ is generated by the
element $\Delta^{2}$, where
\begin{align*}
\Delta & :=\sigma_{1}\sigma_{2}\sigma_{3}\sigma_{1}\sigma_{2}\sigma_{1}.
\end{align*}

For the identity matrix $I_{3}$, a direct computation yields
\begin{align}
\beta_{4}\left(\Delta^{2}\right) & =t^{4}I_{3}\in\ker p.
\end{align}

From (1), we define a group homomorphism $\gamma:Q_{4}\to\mathrm{PGL}\left(3,\,\mathbb{F}_{2}\left[t,t^{-1}\right]\right)$
by 
\begin{align*}
\gamma\left(bZ\left(B_{4}\right)\right) & :=p\left(\beta_{4}\otimes\mathbb{F}_{2}\left(b\right)\right),\;b\in B_{4},
\end{align*}

\noindent where we denote by $bZ\left(B_{4}\right)$ the coset of
$b$ in $B_{4}$.

Then, we directly see the following diagram
\begin{align*}
\xymatrix{B_{4}\ar[d]^{\pi}\ar[r]^{\beta_{4}\otimes\mathbb{F}_{2}} & B_{4,2}\ar[d]^{p|_{B_{4,2}}}\\
Q_{4}\ar[r]^{\gamma} & Q_{4,2}
}
\end{align*}

\noindent commutes.

Abusing notation, we denote by $x$ (resp. $y$) the image of $\sigma_{1}\sigma_{2}\sigma_{3}\in B_{4}$
(resp. $\sigma_{1}\sigma_{2}\sigma_{3}\sigma_{1}\in B_{4}$) in $B_{4}$,
$B_{4,2}$, $Q_{4}$, or $Q_{4,2}$ as before. For a list $W$ of
elements in a group $A$, we denote by $\left\langle W\right\rangle _{A}$
(resp. $\left\langle \!\!\left\langle W\right\rangle \!\!\right\rangle _{A}$)
the subgroup generated (resp. normally generated) by the elements
in $W$. We omit the subscript $A$ when there is no room for confusion.
Let us define $G$ to be the HNN extension of $Q_{4}$ constructed
from the identity map from $\left\langle yxy\right\rangle \subset Q_{4}$
to itself, and denote by $\epsilon:Q_{4}\hookrightarrow G$ the associated
injection. In other words, by abuse of notation, $G$ is presented
by
\begin{align*}
\left\langle x,y,t\;\left|\;\begin{aligned}1=x^{4}=y^{3}=\left[x^{2},yxy\right],\\
1=\left[t,yxy\right].
\end{aligned}
\right.\right\rangle ,
\end{align*}

\noindent where $\epsilon\left(x\right)=x$ and $\epsilon\left(y\right)=y$.
For brevity, we adopt the convention identifying $Q_{4}$ with its
image $\epsilon\left(Q_{4}\right)\subset G$ in the statement of our
main theorem:

\noindent \begin{theorem}

\noindent The kernel of the homomorphism $\gamma:Q_{4}\to Q_{4,2}$
satisfies the identity
\begin{align}
\ker\gamma & =Q_{4}\cap\left\langle \!\!\left\langle \left[yxy,x\right]tx^{-1}t^{-1}\right\rangle \!\!\right\rangle _{G}.
\end{align}

\noindent \end{theorem}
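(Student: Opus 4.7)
The plan is to extend $\gamma$ across the HNN inclusion $\epsilon:Q_{4}\hookrightarrow G$ to a homomorphism $\tilde{\gamma}:G\to\bar{H}$ for some target group $\bar{H}\supseteq Q_{4,2}$, and then to identify $\ker\tilde{\gamma}$ with the normal closure $\langle\!\langle[yxy,x]tx^{-1}t^{-1}\rangle\!\rangle_{G}$. Once this is achieved, restricting to $Q_{4}$ gives
\[
\ker\gamma=Q_{4}\cap\ker\tilde{\gamma}=Q_{4}\cap\langle\!\langle[yxy,x]tx^{-1}t^{-1}\rangle\!\rangle_{G},
\]
which is exactly (2).

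To build $\tilde{\gamma}$, I invoke the universal property of the HNN extension: it suffices to locate an element $\tau\in\bar{H}$ with $[\tau,\gamma(yxy)]=1$, and to set $\tilde{\gamma}(x)=\gamma(x)$, $\tilde{\gamma}(y)=\gamma(y)$, $\tilde{\gamma}(t)=\tau$. For this extension to kill the proposed normal generator we need, in addition,
\[
\gamma([yxy,x])\cdot\tau\cdot\gamma(x)^{-1}\cdot\tau^{-1}=1,\qquad\textrm{i.e.,}\qquad\tau\,\gamma(x)\,\tau^{-1}=\gamma([yxy,x]).
\]
Finding such a $\tau$ is where the Cooper--Long presentation of $Q_{4,2}$ from \citep{MR1431138} enters; the cited Lemma 2.1 presumably organises that presentation so that the required $\tau$ is visible (either already in $Q_{4,2}$, or in a controlled enlargement $\bar{H}$). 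Once $\tau$ is produced, the inclusion $\supseteq$ in (2) is immediate, since the defining normal generator and hence its whole normal closure lies in $\ker\tilde{\gamma}$, and any word of $Q_{4}$ inside this normal closure is automatically in $\ker\gamma$.

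For the reverse inclusion, the strategy is to show that $\tilde{\gamma}$ descends to an \emph{isomorphism} $G/\langle\!\langle[yxy,x]tx^{-1}t^{-1}\rangle\!\rangle_{G}\xrightarrow{\sim}\bar{H}$. Starting from the obvious presentation
\[
\left\langle x,y,t\;\middle|\;x^{4},\;y^{3},\;[x^{2},yxy],\;[t,yxy],\;[yxy,x]\,tx^{-1}t^{-1}\right\rangle
\]
of the quotient, I would apply Tietze transformations to reshape it into the Cooper--Long presentation of $\bar{H}$; the extra relation $txt^{-1}=[yxy,x]$ is precisely what allows the stable letter $t$ to play the role of the extra generator in that presentation. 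Showing this transformation succeeds, without introducing any collapse beyond that predicted by $\bar{H}$, is the content of Lemma 2.1.

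The main obstacle is this last matching of presentations: $G$ is an infinite HNN extension and its normal closure of a single element can be intricate, so one must rule out ``hidden'' relations in $G/\langle\!\langle[yxy,x]tx^{-1}t^{-1}\rangle\!\rangle_{G}$ beyond those recorded in the Cooper--Long presentation of $\bar{H}$. Once the Tietze comparison is carried out and $\tilde{\gamma}$ is shown to induce an isomorphism on the quotient, the identity $\ker\tilde{\gamma}=\langle\!\langle[yxy,x]tx^{-1}t^{-1}\rangle\!\rangle_{G}$ is automatic, and Theorem 1.1 follows by intersecting with $Q_{4}$.
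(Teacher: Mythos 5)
Your proposal follows essentially the same route as the paper: your $\bar{H}$ is the paper's $G_{0}=G/\left\langle \!\!\left\langle \left[yxy,x\right]tx^{-1}t^{-1}\right\rangle \!\!\right\rangle _{G}$, your $\tilde{\gamma}$ is its quotient map $\pi_{0}$, and the entire weight of the argument rests, exactly as you indicate, on Lemma 2.1, i.e.\ the embedding $\epsilon_{0}:Q_{4,2}\hookrightarrow G_{0}$ obtained by realizing $G_{0}$ as an HNN extension of $Q_{4,2}$ over the isomorphic subgroups $\left\langle x,yxy\right\rangle $ and $\left\langle \left[yxy,x\right],yxy\right\rangle $. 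Granting that lemma, your commuting-square argument (with $\tau$ the image of the stable letter $t$) is the paper's proof in all but notation.
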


The left hand side of (2) is defined in an arithmetic way and the
other side is defined in group-theoretic terms. It is well-known that
$\ker\gamma$ is nontrivial, which Cooper and Long showed in \citep{MR1431138}.
We recover this result from Theorem 1.1, by illustrating the right
hand side of (2) is nontrivial.

\noindent \begin{corollary}

\noindent The group $\ker\gamma$ is nontrivial.

\noindent \end{corollary}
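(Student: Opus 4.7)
By Theorem 1.1 the group $\ker\gamma$ equals $Q_4\cap N$, where $N:=\langle\!\!\langle [yxy,x]\,tx^{-1}t^{-1}\rangle\!\!\rangle_G$, so it suffices to produce a single nontrivial element of $Q_4\cap N$. My plan is to do this by explicit construction. Set $u:=yxy$ and $w:=[u,x]\,tx^{-1}t^{-1}$. Two structural facts govern the situation: the HNN relation $[t,u]=1$, and the identity $txt^{-1}=[u,x]$ holding in $G/N$ (immediate from $w=1$). Together they pin down every $t$-letter that appears inside $N$: the letter $t$ commutes with powers of $u$, and modulo $N$ it conjugates $x$ to $[u,x]$.

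Building on this, I would form a product of conjugates of $w^{\pm 1}$ by elements of $G$ engineered so that every $t$-letter cancels against another after moving past appropriate powers of $u$. Concretely, paired factors of the form $g\,w\,g^{-1}\cdot g'\,w^{-1}(g')^{-1}$ with $g,g'$ chosen from words in $x,y,u,t$ can be aligned so that the $tx^{-1}t^{-1}$ tail of the first matches the $txt^{-1}$ factor inside the second, cancelling after commuting past the $u$-parts. The resulting element $r$ will lie in $Q_4\cap N$ by construction.

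The remaining step is to certify $r\neq 1$ in $Q_4$, which I would do by evaluating $r$ in a convenient finite quotient of $Q_4$ and exhibiting a nontrivial image. The principal obstacle is that Britton's Lemma severely constrains the cancellation: each $t^{\varepsilon}\cdots t^{-\varepsilon}$ pair must sandwich an element of the associated subgroup $\langle u\rangle$, and balancing this constraint against the requirement that the residue remain nontrivial in $Q_4$ requires a careful choice of conjugators. A more economical route that also establishes the corollary is to import any explicit nontrivial element of $\ker\gamma$ exhibited by Cooper\textendash Long in \citep{MR1431138}; by Theorem 1.1 such an element automatically lies in $Q_4\cap N$, witnessing the nontriviality of the right-hand side of (2).
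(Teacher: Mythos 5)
Your first route is the same strategy as the paper's proof, but it stops exactly where the work is: you never produce the element, and you correctly identify (but do not resolve) the obstruction of arranging the $t$-letters to cancel. The missing concrete idea is this: with $\alpha:=[yxy,x]tx^{-1}t^{-1}$ and $\delta:=txt^{-1}$, the product $\alpha\,(\delta\alpha\delta^{-1})(\delta^{2}\alpha\delta^{-2})(\delta^{3}\alpha\delta^{-3})$ telescopes, because the tail $tx^{-k}t^{-1}$ of each partial product cancels against the head $tx^{k}t^{-1}$ of the next conjugate, leaving $[yxy,x]^{4}\,tx^{-4}t^{-1}$; since $x^{4}=1$ in $Q_{4}$, the residual syllable is $t\cdot 1\cdot t^{-1}=1$, so the product equals $[yxy,x]^{4}\in Q_{4}$. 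This also dissolves your worry about Britton's Lemma: no nontrivial element of $\langle yxy\rangle$ ever needs to pass through the stable letter. Nontriviality of $[yxy,x]^{4}$ is then checked not in a finite quotient but by computing the determinant-$1$ representative under $\beta_{4}$ over $\mathbb{Z}[t,t^{-1}]$. Without some such explicit element and verification, the constructive half of your proposal is a plan, not a proof.

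Your fallback --- citing Cooper--Long's nontriviality result and noting that any nontrivial element of $\ker\gamma$ lies in $Q_{4}\cap N$ by Theorem 1.1 --- is logically valid for the bare statement, but it inverts the intended direction of the corollary. The paper's point is to \emph{recover} Cooper--Long's theorem from Theorem 1.1 by exhibiting a nontrivial element of the normal closure side directly; importing their result as an input makes the corollary vacuous as a consequence of Theorem 1.1. So the fallback saves the statement but not the argument the corollary is meant to carry.
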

\begin{proof}

\noindent Temporarily define $\delta:=txt^{-1},\;\alpha:=\left[yxy,x\right]tx^{-1}t^{-1}.$
The right hand side of (2) includes an element $\left[yxy,x\right]^{4}$
in $Q_{4}$. Indeed, a direct computation yields:
\begin{align*}
 & \,\left[yxy,x\right]^{4}\\
= & \,\left(\left[yxy,x\right]tx^{-1}t^{-1}\right)\left(txt^{-1}\left[yxy,x\right]tx^{-2}t^{-1}\right)\left(tx^{2}t^{-1}\left[yxy,x\right]tx^{-3}t^{-1}\right)\left(tx^{3}t^{-1}\left[yxy,x\right]\right)\\
= & \,\alpha\left(\delta\alpha\delta^{-1}\right)\left(\delta^{2}\alpha\delta^{-2}\right)\left(tx^{3}t^{-1}\left[yxy,x\right]tx^{-4}t^{-1}\right)\\
= & \,\alpha\left(\delta\alpha\delta^{-1}\right)\left(\delta^{2}\alpha\delta^{-2}\right)\left(\delta^{3}\alpha\delta^{-3}\right),
\end{align*}

\noindent where we used the relation $x^{4}=1$ in $Q_{4}$.

We only need to check that $\left[yxy,x\right]^{4}$ is nontrivial
in $Q_{4}$. This fact directly follows by computing the images of
determinant 1 of the representatives $\left[yxy,x\right]^{4}$ in
$B_{4}$ under $\beta_{4}$.$\qedhere$

\noindent \end{proof}

We state an analogue of Theorem 1.1 for the representation $\beta_{4}\otimes\mathbb{F}_{2}:B_{4}\to B_{4,2}$
mentioned in the title. Let us define $\widetilde{G}$ to be the HNN
extension of $B_{4}$ constructed from the identity map from the subgroup
$\left\langle yxy,\Delta^{2}\right\rangle \subset B_{4}$ to itself,
and denote by $\widetilde{\epsilon}:B_{4}\hookrightarrow\widetilde{G}$
the associated injection. In other words, $\widetilde{G}$ is presented
as follows:
\begin{align*}
\left\langle x,y,t\;\left|\;\begin{aligned}1=x^{4}y^{-3}=\left[x^{2},yxy\right],\\
1=\left[t,yxy\right]=\left[t,x^{4}\right].
\end{aligned}
\right.\right\rangle ,
\end{align*}

\noindent where $\widetilde{\epsilon}\left(x\right)=x$ and $\widetilde{\epsilon}\left(y\right)=y$.
For brevity, we define $x_{1}$ to be $\left[yxy,x\right]$ as a word
in $x$ and $y$, and adopt the convention identifying $B_{4}$ with
its image $\widetilde{\epsilon}\left(B_{4}\right)\subset\widetilde{G}$.

\noindent \begin{theorem}

\noindent The kernel of the representation $\beta_{4}\otimes\mathbb{F}_{2}:B_{4}\to B_{4,2}$
satisfies the identity
\begin{align*}
\ker\left(\beta_{4}\otimes\mathbb{F}_{2}\right) & =B_{4}\cap\left\langle \!\!\left\langle x_{1}^{4},\:\left[x_{1}^{2},yxy\right],\:\left[yxy,x_{1}\right]tx_{1}^{-1}t^{-1}\right\rangle \!\!\right\rangle _{\widetilde{G}}.
\end{align*}

\noindent \end{theorem}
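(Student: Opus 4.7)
The plan is to construct an explicit isomorphism $\overline{\Phi}: \widetilde{G}/N \xrightarrow{\sim} B_{4,2}$, where $N$ denotes the normal closure appearing on the right-hand side, such that the composition $B_4 \hookrightarrow \widetilde{G} \twoheadrightarrow \widetilde{G}/N$ coincides with $\beta_{4}\otimes\mathbb{F}_{2}$. Since $\widetilde{\epsilon}$ is injective by the general theory of HNN extensions, the identity $\ker(\beta_{4}\otimes\mathbb{F}_{2}) = \widetilde{\epsilon}^{-1}(N) = B_4 \cap N$ follows at once. The overall strategy parallels the proof of Theorem 1.1; the new ingredient is to track the information lost under the central quotient $\pi: B_4 \to Q_4$, which accounts for the two extra normal generators $x_1^4$ and $[x_1^2, yxy]$ beyond the single HNN-type relation $[yxy, x_1]tx_1^{-1}t^{-1}$ familiar from Theorem 1.1.

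First, I would define a homomorphism $\Phi: \widetilde{G} \to B_{4,2}$ that sends $x$ and $y$ to their images under $\beta_{4}\otimes\mathbb{F}_{2}$ and sends the stable letter $t$ to a specific matrix in $B_{4,2}$. This matrix must commute with $\Phi(yxy)$ and with $\Phi(x^4)$ (the latter is automatic since $\beta_{4}(x^4)$ is a scalar matrix), and, to kill the third normal generator of $N$, it must satisfy $\Phi(t)\,\Phi(x_1)\,\Phi(t)^{-1} = \Phi([yxy, x_1] \cdot x_1)$. The existence of such a matrix, and hence the well-definedness of $\Phi$, is precisely where Lemma 3.1 should enter, playing the role that Lemma 2.1 plays in Theorem 1.1.

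Once $\Phi$ is in place, I would verify that all three normal generators of $N$ lie in $\ker \Phi$: the third by construction, and the first two, $x_1^4$ and $[x_1^2, yxy]$, by a direct matrix computation in $\mathrm{GL}(3,\,\mathbb{F}_{2}[t, t^{-1}])$. Informally, these two are the $B_4$-style relations (on $x$) transplanted to $x_1$; they encode the extra torsion that emerges after reducing Burau entries modulo $2$. Hence $\Phi$ descends to $\overline{\Phi}: \widetilde{G}/N \to B_{4,2}$, and surjectivity of $\overline{\Phi}$ is automatic because $\beta_{4}\otimes\mathbb{F}_{2}$ is surjective onto $B_{4,2}$ by definition.

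The main obstacle is proving that $\overline{\Phi}$ is injective, and this is where I would invest the bulk of the work. My approach is to perform a sequence of Tietze transformations on the presentation $\langle x, y, t \mid \ldots \rangle$ of $\widetilde{G}/N$ to bring it to Cooper--Long's presentation of $B_{4,2}$. The HNN-style relation on $t$ allows one to rewrite conjugation by $t$ as an explicit word in $x$ and $y$, while the two new torsion relations on $x_1$ supply exactly the relations present in $B_{4,2}$ but absent in $B_4$. The delicate part is managing the interplay between the HNN conjugation data and the torsion on $x_1$: loose bookkeeping would either introduce unwanted torsion inside the embedded copy of $B_4$ (contradicting its torsion-freeness) or fail to recover all of $B_{4,2}$. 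Lemma 3.1 should be precisely the technical statement that makes this Tietze reduction go through cleanly, so that once it is in hand the remainder of the argument is a finite and essentially mechanical comparison of presentations.
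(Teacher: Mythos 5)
Your overall architecture rests on a claim that is false: the quotient $\widetilde{G}/N$ (where $N$ is the normal closure on the right-hand side) is \emph{not} isomorphic to $B_{4,2}$, so the isomorphism $\overline{\Phi}$ you set out to construct does not exist. Comparing presentations, $\widetilde{G}/N$ is exactly the group $\widetilde{G_{1}}$ of Section 3, and the content of Lemma 3.1 is that $B_{4,2}$ embeds \emph{into} $\widetilde{G_{1}}$ as the subgroup generated by $x$ and $y$; the stable letter $t$ survives as a genuinely new element, since $\widetilde{G_{1}}$ is itself an HNN extension of $B_{4,2}$ along $\widetilde{\eta_{12}}:\widetilde{S_{1}}\to\widetilde{S_{2}}$ and Britton's lemma forbids $t$ from lying in the base group. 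A quick invariant check confirms the two groups cannot be isomorphic: abelianizing the presentation of $\widetilde{G_{1}}$ kills every commutator (in particular every $x_{i}$ with $i\ge1$) and leaves only the relation $4x=3y$, so the abelianization is $\mathbb{Z}\oplus\mathbb{Z}$ with $t$ contributing a free summand, whereas $B_{4,2}$ is a homomorphic image of $B_{4}$ and therefore has cyclic abelianization. Consequently your plan to send $t$ to a matrix in $B_{4,2}$ and then eliminate $t$ by Tietze transformations down to Cooper--Long's presentation cannot go through. (Two further symptoms of the problem: the relation $\left[yxy,x_{1}\right]tx_{1}^{-1}t^{-1}=1$ forces $\Phi\left(t\right)\Phi\left(x_{1}\right)\Phi\left(t\right)^{-1}=\Phi\left(\left[yxy,x_{1}\right]\right)=\Phi\left(x_{2}\right)$, not $\Phi\left(\left[yxy,x_{1}\right]x_{1}\right)$; and by Remark 2.3 the matrices realizing this conjugation while centralizing $yxy$ are the $M_{f}$, which lie in $\mathrm{GL}\left(3,\,\mathbb{F}_{2}\left(t\right)\right)$ but not in $\mathrm{GL}\left(3,\,\mathbb{F}_{2}\left[t,t^{-1}\right]\right)$, let alone in $B_{4,2}$.)

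The correct arrow points the other way. The paper takes the injection $\widetilde{\epsilon_{1}}:B_{4,2}\hookrightarrow\widetilde{G_{1}}$ furnished by Lemma 3.1, observes that the square formed by $\widetilde{\epsilon}$, $\beta_{4}\otimes\mathbb{F}_{2}$, the quotient map $\widetilde{\pi_{1}}:\widetilde{G}\to\widetilde{G_{1}}$ and $\widetilde{\epsilon_{1}}$ commutes (all four maps fix $x$ and $y$), and concludes $\widetilde{\epsilon}\left(\ker\beta_{4}\otimes\mathbb{F}_{2}\right)=\widetilde{\epsilon}\left(B_{4}\right)\cap\ker\widetilde{\pi_{1}}$ with $\ker\widetilde{\pi_{1}}=N$. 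An injection of $B_{4,2}$ into the quotient is all that is needed; no map out of the quotient, and certainly no isomorphism, is required. You have correctly located the hard technical input --- Lemma 3.1, proved via the infinite presentation of $B_{4,2}$ in Lemma 3.2 and the partial conjugation matrix $M$ of Lemma 3.4 --- and you are right that the extra normal generators $x_{1}^{4}$ and $\left[x_{1}^{2},yxy\right]$ encode torsion present in $B_{4,2}$ but absent in $B_{4}$; but the scaffolding around that input must be reversed for the argument to close.
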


The rest of this paper is organized as follows. As we mentioned earlier,
Lemma 2.1 (resp. Lemma 3.1) plays a key role in the proof of the main
result, Theorem 1.1 (resp. Theorem 1.3). In Section 2, we prove Lemma
2.1 and Theorem 1.1. In Section 3, we prove Lemma 3.1 and Theorem
1.3.

\noindent \begin{ack}

\noindent This paper is supported by National Research Foundation
of Korea (grant number 2020R1C1C1A01006819) and Samsung Science and
Technology Foundation (project number SSTF-BA2001-01). The author
is grateful to the advisor Dohyeong Kim for various valuable comments
on the organization and revision of this paper.

\noindent \end{ack}

\section{Proof of Theorem 1.1\label{sec:Proof-of-Theorem-1.1}}

For each nonnegative integer $i$, let us define a family $\left\{ G_{i}\right\} _{i\ge0}$
of finitely presented groups by the presentation:
\begin{align*}
G_{i} & :=\left\langle x,y,t\;\left|\;\begin{aligned}1=x_{j}^{4}=y^{3}=\left[x_{j}^{2},yxy\right],\:j\in\left\{ 0,1,\cdots,i\right\} ,\\
1=\left[t,yxy\right],\:tx_{i}t^{-1}=\left[yxy,x_{i}\right].
\end{aligned}
\right.\right\rangle ,
\end{align*}

\noindent where $x_{i}$ is a word in $x$ and $y$, recursively defined
as
\begin{align}
x_{0} & :=x,\;x_{i+1}:=\left[yxy,x_{i}\right],\;0\le i.
\end{align}

We state the key lemma. Recall that $Q_{4,2}$ is generated by $x$
and $y$.

\noindent \begin{lemma}

\noindent For each nonnegative integer $i$, there is an injective
homomorphism $\epsilon_{i}:Q_{4,2}\to G_{i}$ satisfying
\begin{align*}
\epsilon_{i}\left(x\right) & =x,\;\epsilon_{i}\left(y\right)=y.
\end{align*}

\noindent \end{lemma}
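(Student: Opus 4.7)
The plan is to define $\epsilon_i$ on generators by $\epsilon_i(x) = x$, $\epsilon_i(y) = y$, to verify well-definedness using Cooper--Long's finite presentation of $Q_{4,2}$ from \citep{MR1431138}, and to establish injectivity by recognizing $G_i$ as an HNN extension of the subgroup of $G_i$ generated by $x$ and $y$ and then invoking Britton's lemma.

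For well-definedness, a key preliminary observation is that in $G_i$ one has $tx_{i+k}t^{-1} = x_{i+k+1}$ for every $k \ge 0$. This follows by induction on $k$: the base case $k = 0$ is the defining relation of $G_i$, and the inductive step uses $[t, yxy] = 1$ together with
\[
tx_{i+k}t^{-1} \;=\; t\,[yxy, x_{i+k-1}]\,t^{-1} \;=\; [t\,yxy\,t^{-1},\, t x_{i+k-1} t^{-1}] \;=\; [yxy, x_{i+k}] \;=\; x_{i+k+1}.
\]
Hence $t^{k} x_i t^{-k} = x_{i+k}$ in $G_i$ for all $k \ge 0$, so conjugating the built-in relations $x_j^4 = 1$ and $[x_j^2, yxy] = 1$ (valid for $0 \le j \le i$) by appropriate powers of $t$ propagates them to all $x_j$ with $j \ge 0$. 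Since Cooper--Long's presentation of $Q_{4,2}$ adds to the $Q_4$-relations only finitely many relations expressible in terms of the iterated commutators $x_j$ (in particular of the form $x_j^4$ and $[x_j^2, yxy]$, together with their consequences), every defining relation of $Q_{4,2}$ holds in $G_i$ under $x \mapsto x$, $y \mapsto y$.

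For injectivity I would exhibit $G_i$ as an HNN extension of the subgroup $K_i$ generated by $x, y$, with stable letter $t$, associated subgroups $\langle yxy, x_i\rangle$ and $\langle yxy, x_{i+1}\rangle$ of $K_i$, and partial isomorphism sending $yxy \mapsto yxy$ and $x_i \mapsto x_{i+1}$. Assuming this HNN structure, Britton's lemma yields an embedding $K_i \hookrightarrow G_i$. A separate comparison of presentations, obtained by Tietze transformations, should then identify $Q_{4,2}$ with $K_i$ (or at least embed the former into the latter), using that the defining relations of $K_i$ are precisely those singled out by Cooper--Long. Composition of the two maps gives the desired $\epsilon_i \colon Q_{4,2} \hookrightarrow G_i$.

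The hardest step will be verifying that the partial map $yxy \mapsto yxy$, $x_i \mapsto x_{i+1}$ genuinely extends to an isomorphism between $\langle yxy, x_i\rangle$ and $\langle yxy, x_{i+1}\rangle$ inside $K_i$. This requires showing that every relation among $yxy$ and $x_i$ in $K_i$ is also a relation among $yxy$ and $x_{i+1}$, and vice versa. I expect to exploit the shift symmetry of the $G_i$-presentation under $j \mapsto j+1$, combined with Tietze transformations that successively adjoin and eliminate the auxiliary generators $x_j$, to reduce the verification to a finite check near the boundaries of the index range.
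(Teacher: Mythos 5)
Your overall architecture (send $x\mapsto x$, $y\mapsto y$; propagate the relations $x_j^4=1$ and $[x_j^2,yxy]=1$ to all $j>i$ by conjugating with powers of $t$; get injectivity from an HNN structure on $G_i$) matches the paper, and your $t$-conjugation computation is exactly the redundancy argument the paper uses to identify $G_i$ with an HNN extension. But the step you defer to the end is precisely the mathematical content of the lemma, and your plan for it does not work. To apply the HNN embedding theorem you must show that $yxy\mapsto yxy$, $x_i\mapsto x_{i+1}$ extends to an isomorphism $\langle yxy,x_i\rangle\to\langle yxy,x_{i+1}\rangle$ of subgroups of a group you already understand. The paper does this inside $Q_{4,2}$ (Lemma 2.3) by exhibiting an explicit matrix $M\in\mathrm{GL}\left(3,\mathbb{F}_2(t)\right)$ with $MxM^{-1}=t\left[yxy,x\right]$ and $M(yxy)M^{-1}=yxy$, so that the isomorphism is conjugation by $p(M)$ in $\mathrm{PGL}\left(3,\mathbb{F}_2(t)\right)$. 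This is a genuinely non-formal input: the paper's Remark 2.7 shows no such matrix exists over $\mathbb{F}_2\left[t,t^{-1}\right]$ up to scalars, so the shift is not realized by any symmetry visible in the ambient group containing $Q_{4,2}$, let alone by a ``shift symmetry of the presentation.'' Indeed there is no endomorphism of $Q_{4,2}$ shifting the $x_j$ (the relation $y^3=1$ and the fact that $x_{j+1}$ is a word in $x_j$ and $yxy$ break any such symmetry), and verifying that \emph{every} relation between $x_i$ and $yxy$ is also a relation between $x_{i+1}$ and $yxy$ cannot be reduced to ``a finite check near the boundaries'': these are infinite subgroups of a group that is not even finitely presentable in any obvious way.

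Two further problems. First, your injectivity argument is circular as stated: you take the base of the HNN extension to be the subgroup $K_i=\langle x,y\rangle\le G_i$, but determining $K_i$ (its presentation, and whether the partial map on its subgroups is an isomorphism) is equivalent to the lemma itself. The paper avoids this by building the HNN extension $H_i$ of the \emph{known} group $Q_{4,2}$ from the isomorphism of Lemma 2.3, and then matching the presentation of $H_i$ with that of $G_i$ by Tietze transformations; injectivity of $Q_{4,2}\hookrightarrow H_i\cong G_i$ is then automatic. Second, you describe Cooper--Long's presentation of $Q_{4,2}$ as finite and as consisting of relations of the form $x_j^4$ and $[x_j^2,yxy]$. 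Neither is true: the group is only recursively presented (which is why embedding it into the finitely presented $G_i$ has content), and Cooper--Long's relations are written in terms of the conjugates $a_j=(yxy)^{j+1}x(yxy)^{-j-1}$. Converting that presentation into the one you need --- generators $x,y$ with relations $y^3=x_j^4=[x_j^2,yxy]=1$ for all $j\ge 0$ --- is the paper's Lemma 2.2, whose proof occupies the entire appendix; you are assuming it for free.
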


Our proof of Lemma 2.1 relies on Lemma 2.2 and Lemma 2.3.

\noindent \begin{lemma}

\noindent The group $Q_{4,2}$ is presented by the generators $x,y$
and the following relations:
\begin{lyxlist}{00.00.0000}
\item [{$\left.1_{o}\right)$}] \noindent $1=y^{3},$
\item [{$\left.1_{x}\right)$}] \noindent $1=x_{i}^{4},$ for each integer
$i\ge0,$
\item [{$\left.2_{x}\right)$}] \noindent $1=\left[x_{i}^{2},yxy\right],$
for each integer $i\ge0,$
\end{lyxlist}
\noindent where $x_{i}$ is a word in $x$ and $y$ defined by (3).

\noindent \end{lemma}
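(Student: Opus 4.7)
My plan is to derive the presentation stated in the lemma from the finite presentation of $Q_{4,2}$ given by Cooper--Long in \citep{MR1431138}, using Tietze transformations throughout. The first observation is that the relations $1_o$, $1_x|_{i=0}$, and $2_x|_{i=0}$ are exactly a defining set of relations for $Q_4$; via the canonical surjection $\gamma : Q_4 \twoheadrightarrow Q_{4,2}$, the lemma therefore reduces to the claim that $\ker\gamma$ coincides with the normal closure in $Q_4$ of the family $\mathcal{R} := \{x_i^4,\, [x_i^2, yxy] : i \ge 1\}$.

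For the inclusion $\left\langle \!\!\left\langle \mathcal{R}\right\rangle \!\!\right\rangle _{Q_4} \subseteq \ker\gamma$, I would proceed by induction on $i$, showing that $x_i^4$ and $[x_i^2, yxy]$ each map to the identity in $Q_{4,2}$. The base case $i = 0$ follows from the $Q_4$ relations themselves. For the inductive step, the recursive definition $x_{i+1} = [yxy, x_i]$, combined with the already-verified relation $[x_i^2, yxy] = 1$ (which says that $yxy$ commutes with $x_i^2$), lets one rewrite $x_{i+1}^2$ and $x_{i+1}^4$ as short words in $x_i$ and $yxy$. A direct matrix computation in $\mathrm{PGL}(3, \mathbb{F}_2[t,t^{-1}])$ using the explicit forms of $\gamma(x)$ and $\gamma(y)$ should then confirm that both images vanish.

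For the reverse inclusion $\ker\gamma \subseteq \left\langle \!\!\left\langle \mathcal{R}\right\rangle \!\!\right\rangle _{Q_4}$, I would take each extra relation in Cooper--Long's presentation (i.e.\ each defining relation of $Q_{4,2}$ not already present in $Q_4$) and rewrite it, by explicit Tietze manipulation in $Q_4$, as a product of conjugates of elements of $\mathcal{R}$. This is where I expect the main obstacle to lie: the family $\mathcal{R}$ is evidently engineered so that Cooper--Long's finitely many extra relations match a low-index truncation of $\mathcal{R}$ — the element $x_1^4 = [yxy,x]^4$ already appearing in the proof of Corollary 1.2 is a canonical witness — but carrying out this matching rigorously requires unpacking their combinatorial presentation and performing word-level rewriting inside $Q_4$. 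Once both inclusions are established, the identification of $\ker\gamma$ with $\left\langle \!\!\left\langle \mathcal{R}\right\rangle \!\!\right\rangle _{Q_4}$ yields the stated presentation.
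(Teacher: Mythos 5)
Your reduction is sound: since $1_o)$, $1_x)|_{i=0}$, $2_x)|_{i=0}$ are exactly the defining relations of $Q_4$, the lemma is equivalent to $\ker\gamma=\left\langle \!\!\left\langle \mathcal{R}\right\rangle \!\!\right\rangle _{Q_{4}}$ for $\mathcal{R}=\left\{ x_{i}^{4},\left[x_{i}^{2},yxy\right]:i\ge1\right\}$, and your high-level strategy (Tietze transformations from Cooper--Long's presentation) is the paper's. But both halves of your argument have genuine gaps. For $\left\langle \!\!\left\langle \mathcal{R}\right\rangle \!\!\right\rangle _{Q_{4}}\subseteq\ker\gamma$, the inductive step as you describe it cannot close: if $x_{i+1}^{4}=1$ and $\left[x_{i+1}^{2},yxy\right]=1$ were formal consequences of the relations already verified at stage $i$, then by induction all of $\mathcal{R}$ would follow from the $Q_4$ relations alone, forcing $\gamma$ to be an isomorphism and contradicting Corollary 1.2. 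So the step must inject new information, and a ``direct matrix computation'' can only do that one $i$ at a time --- the matrices representing $x_{i}$ have no uniform description. What actually makes the relations hold for every $i$ is a uniform structural fact: either the matrix $M$ of Lemma 2.3, with $p\left(M\right)x_{i}p\left(M\right)^{-1}=x_{i+1}$ and $p\left(M\right)\left(yxy\right)p\left(M\right)^{-1}=yxy$, which transports the case $i=0$ to all $i$ by conjugation inside $\mathrm{PGL}\left(3,\mathbb{F}_{2}\left(t\right)\right)$; or, as the paper argues, the fact that each $x_{i+1}$ is conjugate by a power of $yxy$ into the finite subgroup $\mathrm{Stab}\left(7^{\left(i\right)}\right)$, which has exponent $4$ with all squares central (Claim 2 and Lemma A.3). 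Neither ingredient appears in your sketch.

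The reverse inclusion is where you explicitly defer, and that deferral is the entire content of the paper's Appendix A; it is not a routine matching of finitely many relations. Two obstructions in particular: first, Cooper--Long's presentation of $Q_{4,2}$ is not purely relational --- it imposes nilpotency as a side condition on the subgroups $\mathrm{Stab}\left(7^{\left(n\right)}\right)=\left\langle x,a_{0},\dots,a_{n}\right\rangle$, so before any rewriting one must prove (Theorem A.4) that nilpotency already follows from the explicit relations $1_a)$--$5_a)$. Second, the passage from the generators $a_{i}=\left(yxy\right)^{i+1}x\left(yxy\right)^{-i-1}$ to the iterated commutators $x_{i}$ requires two changes of generating set ($a_{i}\mapsto b_{i+1}=x^{-1}a_{i}$, then $b_{-i}\mapsto x_{i}$), and the equivalence of the successive relation sets rests on computing the center of $\mathrm{Stab}\left(7^{\left(n\right)}\right)$ (Lemma A.3) and on the commutator identities $3_x)$ and $4_x)$ of Claim 3, each established by a nontrivial induction. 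As written, your proposal is a correct reformulation of the statement together with an acknowledged hole where the proof should be.
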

\begin{proof}

See Appendix A, where we utilize the presentation for $Q_{4,2}$ of
Cooper and Long in \citep{MR1431138} to prove Lemma 2.2.$\qedhere$

\noindent \end{proof}
\begin{lemma}

\noindent For each nonnegative integer $i$, let us define a subgroup
$S_{i}$ to be generated by two elements $x_{i}$ and $yxy$ in $Q_{4,2}$.
Then, for every pair $\left(i,j\right)$, a map $\eta_{ij}$ between
the generating sets
\begin{align*}
\eta_{ij}\left(x_{i}\right) & =x_{j},\;\eta_{ij}\left(yxy\right)=yxy,
\end{align*}

\noindent extends to the group isomorphism $\eta_{ij}:S_{i}\to S_{j}$.

\noindent \end{lemma}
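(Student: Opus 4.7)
The plan is to exhibit all the subgroups $S_i$ as isomorphic images of a single abstract group $\Lambda$ under a common construction; the maps $\eta_{ij}$ then arise tautologically as identifications of these isomorphic copies.

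Define $\omega_{0}(a,b):=a$ and $\omega_{k+1}(a,b):=[b,\omega_{k}(a,b)]$, and set
\[
\Lambda:=\bigl\langle a,b \;\big|\; \omega_{k}(a,b)^{4}=1,\;[\omega_{k}(a,b)^{2},b]=1 \text{ for all } k\ge 0\bigr\rangle .
\]
Under the assignment $a\mapsto x_{i}$, $b\mapsto yxy$, the word $\omega_{k}(a,b)$ specializes to $x_{i+k}$ by the recursion (3). Lemma 2.2 then guarantees that $x_{i+k}^{4}=1$ and $[x_{i+k}^{2},yxy]=1$ in $Q_{4,2}$ for every $k\ge 0$, so the assignment extends to a surjective homomorphism $\phi_{i}:\Lambda\twoheadrightarrow S_{i}$ for each $i\ge 0$.

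Granting that each $\phi_{i}$ is injective, and hence an isomorphism, the composition $\eta_{ij}:=\phi_{j}\circ\phi_{i}^{-1}:S_{i}\to S_{j}$ is a well-defined isomorphism sending $x_{i}\mapsto x_{j}$ and fixing $yxy$, which is exactly the conclusion of the lemma. Note also that the substitution $a\mapsto [b,a]$, $b\mapsto b$ carries each relator $\omega_{k}^{4}$ to $\omega_{k+1}^{4}$ and each $[\omega_{k}^{2},b]$ to $[\omega_{k+1}^{2},b]$, and hence induces an endomorphism $\bar{\sigma}:\Lambda\to\Lambda$ with $\phi_{i+1}=\phi_{i}\circ\bar{\sigma}$. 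So it is actually enough to prove injectivity of $\phi_{0}$ together with injectivity of $\bar{\sigma}$, since then $\phi_{i}=\phi_{0}\circ\bar{\sigma}^{i}$ is injective for every $i$.

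The substance of the argument is therefore injectivity of $\phi_{i}$. My plan is a subgroup-presentation (Reidemeister--Schreier) analysis of the inclusion $S_{i}\le Q_{4,2}$ using the presentation of $Q_{4,2}$ given by Lemma 2.2. A word $w(a,b)\in\ker\phi_{i}$ gives $w(x_{i},yxy)=1$ in $Q_{4,2}$, whence $w(x_{i},yxy)$ is a product of conjugates of $y^{3}$, $x_{k}^{4}$, and $[x_{k}^{2},yxy]$ in $F(x,y)$. The key combinatorial step is to show that the conjugates of $y^{3}$ cancel pairwise in any such expression for a word of the form $w(x_{i},yxy)$, since $y$ only appears there in the packaged form inside $yxy$; after this cancellation the rewriting expresses $w$ as a product of consequences of the $\Lambda$-relators alone, and so $w=1$ in $\Lambda$.

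The main obstacle is the rigorous verification of this cancellation claim. I expect it to require a careful tracking of occurrences of $y$ in the free group $F(x,y)$, likely leveraging the explicit Cooper--Long presentation used in the proof of Lemma 2.2; once this combinatorial step is established, the remainder of the proof of Lemma 2.3 reduces to the formal bookkeeping outlined above.
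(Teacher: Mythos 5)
There is a genuine gap: your argument correctly reduces the lemma to the injectivity of the maps $\phi_{i}:\Lambda\to S_{i}$ (equivalently, of $\phi_{0}$ and $\bar{\sigma}$), but that injectivity is the entire content of the lemma and you do not prove it. The surjectivity of $\phi_{i}$ and the existence of $\bar{\sigma}$ are routine consequences of Lemma 2.2; the hard direction is showing that no \emph{extra} relations hold among $x_{i}$ and $yxy$ inside $Q_{4,2}$. The cancellation claim you propose --- that in any expression of a trivial word $w(x_{i},yxy)$ as a product of conjugates of the relators $y^{3}$, $x_{k}^{4}$, $[x_{k}^{2},yxy]$, the conjugates of $y^{3}$ must cancel pairwise ``since $y$ only appears in the packaged form $yxy$'' --- is not a valid inference. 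A van Kampen diagram for a word in the subgroup generators can use the relator $y^{3}$ in an essential, non-cancelling way (indeed, relations among powers of $yxy$ are exactly where one would expect $y^{3}=1$ to enter), and a Reidemeister--Schreier rewriting of $S_{i}\le Q_{4,2}$ would in any case produce a presentation on a Schreier generating set for a coset transversal, not on $\{x_{i},yxy\}$, so the bookkeeping you outline does not directly yield the presentation $\Lambda$. As written, the proposal establishes only that $\eta_{ij}$ extends to a surjective homomorphism from a quotient of $\Lambda$, not that it is an isomorphism $S_{i}\to S_{j}$.

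The paper avoids this difficulty entirely by a different mechanism: it exhibits an explicit matrix $M\in\mathrm{GL}\left(3,\,\mathbb{F}_{2}\left(t\right)\right)$ with $MxM^{-1}=t\left[yxy,x\right]$ and $M\left(yxy\right)M^{-1}=yxy$, so that conjugation by $p\left(M\right)^{j-i}$ in the ambient group $\mathrm{PGL}\left(3,\,\mathbb{F}_{2}\left(t\right)\right)$ carries $x_{i}$ to $x_{j}$ (by the same induction on the recursion that you use for $\bar{\sigma}$) and fixes $yxy$. Since conjugation in an overgroup is an automorphism, its restriction to $S_{i}$ is automatically an isomorphism onto $S_{j}$, and no presentation of $S_{i}$ is ever needed. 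If you want to salvage your approach, you would need to supply the missing injectivity argument, and the most plausible route is precisely the linear-algebraic one: the existence of $M$ shows all the $S_{i}$ are conjugate in $\mathrm{PGL}\left(3,\,\mathbb{F}_{2}\left(t\right)\right)$, which is the paper's proof.
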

\begin{proof}

\noindent The proof depends on the fact that $Q_{4,2}$ is a subgroup
of $\mathrm{PGL}\left(3,\,\mathbb{F}_{2}\left(t\right)\right)$, as
well as $\mathrm{PGL}\left(3,\,\mathbb{F}_{2}\left[t,t^{-1}\right]\right)$.
Suppose there exists a matrix $M\in\mathrm{GL}\left(3,\,\mathbb{F}_{2}\left(t\right)\right)$
such that
\begin{align}
\begin{aligned} & p\left(M\right)xp\left(M\right)^{-1}=\left[yxy,x\right],\\
 & p\left(M\right)\left(yxy\right)p\left(M\right)^{-1}=yxy,
\end{aligned}
\end{align}

\noindent where $p:\mathrm{GL}\left(3,\,\mathbb{F}_{2}\left(t\right)\right)\to\mathrm{PGL}\left(3,\,\mathbb{F}_{2}\left(t\right)\right)$
is the natural surjection, mentioned in Section 1. Then, for every
pair $\left(i,j\right)$, we define $\eta_{ij}:S_{i}\to S_{j}$ by
conjugation, for each $A\in S_{i}$:
\begin{align*}
\eta_{ij}\left(A\right) & =p\left(M\right)^{j-i}Ap\left(M\right)^{i-j},
\end{align*}

\noindent which is proved inductively as follows. Suppose
\begin{align*}
p\left(M\right)x_{i}p\left(M\right)^{-1} & =x_{i+1}
\end{align*}

\noindent holds for every $i$ such that $0\le i\le N$. Then, by
the recursive definition of $x_{N+1}$,
\begin{align*}
 & \,p\left(M\right)x_{N+1}p\left(M\right)^{-1}\\
= & \,p\left(M\right)\left(yxy\right)^{-1}x_{N}^{-1}\left(yxy\right)x_{N}p\left(M\right)^{-1}\\
= & \,\left(yxy\right)^{-1}x_{N+1}^{-1}\left(yxy\right)x_{N+1}=x_{N+2}.
\end{align*}

Now, we need only to show there exists such a matrix $M$ satisfying
(4). Choose
\begin{align*}
M & =\left(\begin{array}{ccc}
1 & t & t\\
\frac{t^{3}}{\left(1+t\right)^{2}} & \frac{t}{\left(1+t\right)^{2}} & \frac{t^{3}}{\left(1+t\right)^{2}}\\
\frac{1+t+t^{3}}{\left(1+t\right)^{2}} & \frac{t\left(1+t+t^{2}\right)}{\left(1+t\right)^{2}} & \frac{t^{2}}{\left(1+t\right)^{2}}
\end{array}\right).
\end{align*}

Then, for $x,y\in B_{4,2}\subset\mathrm{GL}\left(3,\,\mathbb{F}_{2}\left(t\right)\right)$
a direct computation yields (see the matrices at the beginning in
Appendix A)
\begin{align*}
 & MxM^{-1}=t\left[yxy,x\right],\\
 & M\left(yxy\right)M^{-1}=yxy.\qedhere
\end{align*}

\noindent \end{proof}
\begin{prooflemma21}

\noindent From the isomorphism in Lemma 2.3
\begin{align*}
 & \eta_{i\left(i+1\right)}:S_{i}\to S_{i+1},\\
 & \eta_{i\left(i+1\right)}\left(x_{i}\right)=x_{i+1},\;\eta_{i\left(i+1\right)}\left(yxy\right)=yxy,
\end{align*}

\noindent we construct the HNN extension $H_{i}$ of $Q_{4,2}$. Using
the presentation for $Q_{4,2}$ in Lemma 2.2, $H_{i}$ is presented
by the generators $x,y,t$ and the following relations:
\begin{lyxlist}{00.00.0000}
\item [{$\left.1_{o}\right)$}] \noindent $1=y^{3},$
\item [{$\left.1_{x}\right)$}] \noindent $1=x_{j}^{4},$ for each integer
$j\ge0,$
\item [{$\left.2_{x}\right)$}] \noindent $1=\left[x_{j}^{2},yxy\right],$
for each integer $j\ge0,$
\item [{$\left.t_{i}\right)$}] \noindent $tx_{i}t^{-1}=\left[yxy,x_{i}\right],\;1=\left[t,yxy\right].$
\end{lyxlist}
$\quad\;\:$Then, from $\left.t_{i}\right)$ we obtain
\begin{align}
t^{j-i}x_{i}t^{i-j} & =x_{j},
\end{align}

\noindent for each integer $j$ larger than $i$, in the same inductive
manner as in the proof of Lemma 2.3. Thus, for every integer $j$
such that $j>i$, we have
\begin{align*}
x_{j}^{4} & =t^{j-i}x_{i}^{4}t^{i-j}=1,
\end{align*}

\noindent where we have taken both sides of (5) to the fourth power
and applied $\left.1_{x}\right)$ for $i$, and 
\begin{align*}
 & \,\left[x_{j}^{2},yxy\right]\\
= & \,t^{j-i}x_{i}^{-2}t^{i-j}\left(yxy\right)^{-1}t^{j-i}x_{i}^{2}t^{i-j}\left(yxy\right)\\
= & \,t^{j-i}\left[x_{i}^{2},yxy\right]t^{i-j}=1,
\end{align*}

\noindent where the first equality follows from (5), the second from
$\left.t_{i}\right)$, and the third from $\left.2_{x}\right)$ for
$i$.

Thus, the relations in $\left.1_{x}\right)$ and $\left.2_{x}\right)$
for $j$ such that $j>i$ are redundant. After deleting them, we have
the following presentation for $H_{i}$, which is the same as that
for $G_{i}$,
\begin{align*}
\left\langle x,y,t\;\left|\;\begin{aligned}1=x_{j}^{4}=y^{3}=\left[x_{j}^{2},yxy\right],\:j\in\left\{ 0,1,\cdots,i\right\} ,\\
1=\left[t,yxy\right],\:tx_{i}t^{-1}=\left[yxy,x_{i}\right].
\end{aligned}
\right.\right\rangle ,
\end{align*}

\noindent as defined in the first section.$\qed$

\noindent \end{prooflemma21}
\begin{remark}

Before proving Theorem 1.1, we comment briefly on Lemma 2.1. Generally,
Higman's embedding theorem \citep{MR130286} guarantees that a recursively
presented group, such as $Q_{4,2}$, is embedded into a finitely presented
group in principle. However, finding a computationally useful embedding
is generally a complex task, as exemplified in \citep{MR4478033}.
Concerning the embedding problem on $Q_{4,2}$ we are dealing with,
a helpful way to understand the group $G_{i}$ is to regard it as
a supergroup of the standard lamplighter group $L$. Indeed, the subgroup
$\left\langle x_{i}^{2},t\right\rangle $ of $G_{i}$ is isomorphic
to $L$ (cf. Claim 3 in $\mathrm{Appendix}\;\mathrm{A}$).

\noindent \end{remark}

We return to the proof of Theorem 1.1. In Section 1, we defined the
HNN extension $G$ of $Q_{4}$ by the identity map from $\left\langle yxy\right\rangle \subset Q_{4}$
to itself with the associated injection $\epsilon:Q_{4}\hookrightarrow G.$

\noindent \begin{theorem11}

\noindent The kernel of the homomorphism $\gamma:Q_{4}\to Q_{4,2}$
satisfies the identity
\begin{align*}
\ker\gamma & =Q_{4}\cap\left\langle \!\!\left\langle \left[yxy,x\right]tx^{-1}t^{-1}\right\rangle \!\!\right\rangle _{G}.
\end{align*}
\end{theorem11}
\begin{proof} 

\noindent By construction, the extended group $G$ has the following
presentation:
\begin{align*}
\left\langle x,y,t\;\left|\;\begin{aligned}1=x^{4}=y^{3}=\left[x^{2},yxy\right],\\
1=\left[t,yxy\right].
\end{aligned}
\right.\right\rangle ,
\end{align*}

\noindent and recall that $G_{0}$ is presented by
\begin{align*}
\left\langle x,y,t\;\left|\;\begin{aligned}1=x^{4}=y^{3}=\left[x^{2},yxy\right],\\
1=\left[t,yxy\right],\:txt^{-1}=\left[yxy,x\right].
\end{aligned}
\right.\right\rangle .
\end{align*}

Comparing the two presentations, we have
\begin{align*}
G_{0} & \cong G/\left\langle \!\!\left\langle \left[yxy,x\right]tx^{-1}t^{-1}\right\rangle \!\!\right\rangle _{G},
\end{align*}

\noindent and define $\pi_{0}:G\to G_{0}$ to be the quotient map.

By Lemma 2.1, we take the injection $\epsilon_{0}:Q_{4,2}\to G_{0}$
carrying $x$ (resp. $y$) to $x$ (resp. $y$). By the HNN construction,
$\epsilon$ also carries $x$ (resp. $y$) to $x$ (resp. $y$). Therefore,
the following diagram
\begin{align*}
\xymatrix{Q_{4}\ar@{^{(}->}[d]^{\epsilon}\ar[r]^{\gamma} & Q_{4,2}\ar@{^{(}->}[d]^{\epsilon_{0}}\\
G\ar[r]^{\pi_{0}} & G_{0}
}
\end{align*}

\noindent commutes, and we have
\begin{align*}
\epsilon\left(\ker\gamma\right) & =\epsilon\left(Q_{4}\right)\cap\ker\pi_{0}.
\end{align*}

Now the result desired follows, since by the definition of $\pi_{0}$,
\begin{align*}
\ker\pi_{0} & =\left\langle \!\!\left\langle \left[yxy,x\right]tx^{-1}t^{-1}\right\rangle \!\!\right\rangle _{G}.\qedhere
\end{align*}

\noindent \end{proof}
\begin{remark}

\noindent There exists a natural relation among the groups $G_{i}$.
For each pair $\left(i,j\right)$ such that $0\le i\le j$, define
a map $\Sigma_{ij}:G_{j}\to G_{i}$ by
\begin{align*}
\Sigma_{ij}\left(x\right) & :=x,\:\Sigma_{ij}\left(y\right):=y,\:\Sigma_{ij}\left(t\right):=t,
\end{align*}

\noindent which is well-defined and surjective by definition. Then,
the groups $\left\{ G_{i}\right\} _{i\ge0}$ and the maps $\left\{ \Sigma_{ij}\right\} _{i\le j}$
forms an inverse system. Combining this with Lemma 2.1, it directly
follows that the following diagram
\begin{align*}
\xymatrix{ & G_{j}\ar[d]^{\Sigma_{ij}}\\
Q_{4,2}\ar@{^{(}->}[ur]^{\epsilon_{j}}\ar@{^{(}->}[r]^{\epsilon_{i}} & G_{i}
}
\end{align*}

\noindent always commutes for each pair $\left(i,j\right)$ such that
$0\le i\le j$.

\noindent \end{remark}
\begin{remark}

\noindent One may wonder where the matrix $M$ is from in the proof
of Lemma 2.3. For any rational function $f\in\mathbb{F}_{2}\left(t\right)$,
the matrix $M_{f}\in\mathrm{GL}\left(3,\,\mathbb{F}_{2}\left(t\right)\right)$
defined by
\begin{align*}
M_{f} & :=\left(\begin{array}{ccc}
f & f+1+t & f+1+t\\
\frac{t^{3}}{\left(1+t\right)^{2}} & \frac{t}{\left(1+t\right)^{2}} & \frac{t^{3}}{\left(1+t\right)^{2}}\\
\frac{f\left(1+t^{2}\right)+\left(t+t^{2}+t^{3}\right)}{\left(1+t\right)^{2}} & \frac{f\left(1+t^{2}\right)+\left(1+t+t^{3}\right)}{\left(1+t\right)^{2}} & \frac{f\left(1+t^{2}\right)+1}{\left(1+t\right)^{2}}
\end{array}\right)
\end{align*}

\noindent satisfies the relation
\begin{align*}
 & M_{f}xM_{f}^{-1}=t\left[yxy,x\right],\\
 & M_{f}\left(yxy\right)M_{f}^{-1}=yxy,
\end{align*}

\noindent where $M_{f}$ is always invertible, since $\det M_{f}=t^{2}$.
We chose $M$ as $M_{1}$ above. Conversely, such matrices should
be of the form $M_{f}$ for a rational function $f$ up to scalar
multiplication by solving simultaneous equations on the matrix entries.
In particular, we cannot realize $\eta_{ij}$ as the restriction of
an inner automorphism of $\mathrm{PGL}\left(3,\,\mathbb{F}_{2}\left[t,t^{-1}\right]\right).$
In addition, we should mention that $f$ can be chosen such that $M_{f}$
satisfies even the unitarity, a well-known property satisfied by the
image of the Burau representation.

\noindent \end{remark}

\section{Proof of Theorem 1.3\label{sec:Proof-of-Theorem-1.3}}

We give a proof of Theorem 1.3 which is similar to that of Theorem
1.1. In order to avoid repetition in arguments, we concentrate on
the differences. We defined the HNN extension $\widetilde{\epsilon}:B_{4}\hookrightarrow\widetilde{G}$
by the identity map from $\left\langle yxy,\Delta^{2}\right\rangle \subset B_{4}$
to itself in Section 1. By construction, the extended group $\widetilde{G}$
has the following presentation
\begin{align*}
\left\langle x,y,t\;\left|\;\begin{aligned}1=x^{4}y^{-3}=\left[x^{2},yxy\right],\\
1=\left[t,yxy\right]=\left[t,x^{4}\right].
\end{aligned}
\right.\right\rangle .
\end{align*}

Let us define a family $\left\{ \widetilde{G_{i}}\right\} $ of finitely
presented groups by the presentation for each positive integer $i$
as
\begin{align*}
\widetilde{G_{i}} & :=\left\langle x,y,t\;\left|\;\begin{aligned}1=x^{4}y^{-3}=\left[x^{2},yxy\right]=x_{j}^{4}=\left[x_{j}^{2},yxy\right],\:j\in\left\{ 1,\cdots,i\right\} ,\\
1=\left[t,yxy\right]=\left[t,x^{4}\right],\:tx_{i}t^{-1}=\left[yxy,x_{i}\right].
\end{aligned}
\right.\right\rangle ,
\end{align*}

\noindent where $x_{i}$ is a word in $x$ and $y$ defined by (3).
As in the proof of Theorem 1.1 in Section 2, we need the following
lemma to obtain Theorem 1.3.

\noindent \begin{lemma}

\noindent For each positive integer $i$, there is an injective homomorphism
$\widetilde{\epsilon_{i}}:B_{4,2}\to\widetilde{G_{i}}$ satisfying
\begin{align*}
\widetilde{\epsilon_{i}}\left(x\right) & :=x,\;\widetilde{\epsilon_{i}}\left(y\right):=y.
\end{align*}

\noindent \end{lemma}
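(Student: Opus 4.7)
The plan is to mimic the proof of Lemma~2.1 closely, highlighting only the adjustments needed to pass from $Q_{4,2}$ to $B_{4,2}$ and from $G_{i}$ to $\widetilde{G_{i}}$. Three ingredients are needed: (i) a presentation of $B_{4,2}$ on generators $x,y$ whose relations are $x^{4}y^{-3}=1$, $[x^{2},yxy]=1$, together with $x_{i}^{4}=1$ and $[x_{i}^{2},yxy]=1$ for every $i\ge 1$; (ii) an analogue of Lemma~2.3 producing, for $i\ge 1$, a compatible family of isomorphisms between the subgroups $\widetilde{S_{i}}:=\langle x_{i},\,yxy,\,x^{4}\rangle\subset B_{4,2}$; and (iii) an HNN construction matching the presentation of $\widetilde{G_{i}}$.

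For ingredient (i) I would reapply the Tietze manipulations of Appendix~A to the Cooper--Long presentation of $B_{4,2}$ \emph{without} quotienting by the central element $\Delta^{2}=x^{4}=y^{3}$, so that the single relation $x^{4}y^{-3}=1$ replaces the pair $x^{4}=y^{3}=1$. The crucial new feature is that $x_{0}=x$ is \emph{not} killed by the fourth power in $B_{4,2}$ (since $\beta_{4}(\Delta^{2})=t^{4}I_{3}$), which is precisely why the family in Lemma~3.1 starts at $i=1$ rather than $i=0$.

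For ingredient (ii) I would lift the matrix $M$ from the proof of Lemma~2.3 to $\mathrm{GL}(3,\mathbb{F}_{2}(t))$, where the computation displayed at the end of that proof reads $MxM^{-1}=t[yxy,x]=tx_{1}$ and $M(yxy)M^{-1}=yxy$. The scalar $t$ is central, so it drops out of every commutator; thus for each $i\ge 1$ an induction analogous to the one in Lemma~2.3 yields $Mx_{i}M^{-1}=[yxy,x_{i}]=x_{i+1}$, starting from $Mx_{1}M^{-1}=[yxy,tx_{1}]=[yxy,x_{1}]=x_{2}$. Moreover $Mx^{4}M^{-1}=(tx_{1})^{4}=t^{4}x_{1}^{4}=x^{4}$, where the last equality uses $x_{1}^{4}=1$ from (i) together with $x^{4}=t^{4}I_{3}$ in $B_{4,2}$. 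Therefore conjugation by $M$ restricts to a group isomorphism $\widetilde{\eta}_{i,i+1}:\widetilde{S_{i}}\to\widetilde{S_{i+1}}$ sending $x_{i}\mapsto x_{i+1}$, $yxy\mapsto yxy$, and $x^{4}\mapsto x^{4}$, and iterating gives isomorphisms $\widetilde{\eta}_{ij}$ for every pair $1\le i\le j$.

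For ingredient (iii) I would form the HNN extension $\widetilde{H_{i}}$ of $B_{4,2}$ along $\widetilde{\eta}_{i,i+1}$ with stable letter $t$, whose presentation consists of the relations from (i), the HNN relations $[t,yxy]=1$, $[t,x^{4}]=1$, and $tx_{i}t^{-1}=[yxy,x_{i}]$, plus the ``extra'' relations $x_{j}^{4}=1$ and $[x_{j}^{2},yxy]=1$ for $j>i$. Exactly as in the proof of Lemma~2.1, iterating the HNN relation $tx_{i}t^{-1}=x_{i+1}$ gives $t^{j-i}x_{i}t^{i-j}=x_{j}$, so the extra relations at index $j>i$ are consequences of those at index $i$ and may be deleted, yielding the presentation of $\widetilde{G_{i}}$ and hence $\widetilde{H_{i}}\cong\widetilde{G_{i}}$. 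The injection $\widetilde{\epsilon_{i}}:B_{4,2}\hookrightarrow\widetilde{G_{i}}$ is then provided by the HNN construction, and by construction it carries $x,y$ to $x,y$. The principal obstacle relative to Section~2 is bookkeeping of the scalar $t$ appearing in $MxM^{-1}=tx_{1}$: it forces the edge group of the HNN extension to contain $x^{4}=\Delta^{2}$ (hence the additional defining relation $[t,x^{4}]=1$ in $\widetilde{G_{i}}$), and it is what compels the induction to start at $i=1$, since $x_{0}=x$ is not fixed up to scale by the index shift.
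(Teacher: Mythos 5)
Your proposal is correct and follows essentially the same route as the paper, which obtains Lemma 3.1 by combining the presentation of $B_{4,2}$ (Lemma 3.2) with the conjugation-by-$M$ isomorphisms (Lemma 3.4) and then repeating the HNN-plus-Tietze argument of Lemma 2.1 verbatim; your explicit inclusion of $x^{4}=\Delta^{2}$ in the edge subgroup is a detail the paper leaves implicit. The only divergence is in how the presentation of $B_{4,2}$ is obtained: you rerun the Appendix A manipulations on Cooper--Long's presentation of $B_{4,2}$, whereas the paper deduces Lemma 3.2 from the central extension $1\to\left\langle \Delta^{2}\right\rangle \to B_{4,2}\to Q_{4,2}\to1$ and Lemma 2.2, and its Remark 3.3 notes that both routes work.
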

\begin{prooftheorem13}

\noindent The first member of $\left\{ \widetilde{G_{i}}\right\} $,
$\widetilde{G_{1}}$ is presented by
\begin{align*}
\widetilde{G_{1}} & :=\left\langle x,y,t\;\left|\;\begin{aligned}1=x^{4}y^{-3}=\left[x^{2},yxy\right]=x_{1}^{4}=\left[x_{1}^{2},yxy\right],\\
1=\left[t,yxy\right]=\left[t,x^{4}\right],\:tx_{1}t^{-1}=\left[yxy,x_{1}\right].
\end{aligned}
\right.\right\rangle ,
\end{align*}

\noindent where $x_{1}=\left[yxy,x\right]$. Comparing the presentations
of $\widetilde{G}$ and $\widetilde{G_{1}}$, we have
\begin{align*}
\widetilde{G_{1}} & \cong\widetilde{G}/\left\langle \!\!\left\langle x_{1}^{4},\:\left[x_{1}^{2},yxy\right],\:\left[yxy,x_{1}\right]tx_{1}^{-1}t^{-1}\right\rangle \!\!\right\rangle _{\widetilde{G}},
\end{align*}

\noindent and define $\widetilde{\pi_{1}}:\widetilde{G}\to\widetilde{G_{1}}$
to be the quotient map.

By Lemma 3.1, we take the injection $\widetilde{\epsilon_{1}}:B_{4,2}\to\widetilde{G_{1}}$.
Then, the following diagram
\begin{align*}
\xymatrix{B_{4}\ar@{^{(}->}[d]^{\widetilde{\epsilon}}\ar[r]^{\beta_{4}\otimes\mathbb{F}_{2}} & B_{4,2}\ar@{^{(}->}[d]^{\widetilde{\epsilon_{1}}}\\
\widetilde{G}\ar[r]^{\widetilde{\pi_{1}}} & \widetilde{G_{1}}
}
\end{align*}

\noindent commutes, and we have the desired result from
\begin{align*}
\widetilde{\epsilon}\left(\ker\beta_{4}\otimes\mathbb{F}_{2}\right) & =\widetilde{\epsilon}\left(B_{4}\right)\cap\ker\widetilde{\pi_{1}}.\qed
\end{align*}

\noindent \end{prooftheorem13}

Lemma 3.1 follows from Lemma 3.2 and Lemma 3.4 word for word, just
as in the proof of Lemma 2.1 from Lemma 2.2 and Lemma 2.3.

\noindent \begin{lemma}

\noindent The group $B_{4,2}$ has the following presentation:
\begin{align*}
\left\langle x,y\;\left|\;\begin{aligned}1=x^{4}y^{-3}=\left[x^{2},yxy\right],\\
1=x_{i}^{4}=\left[x_{i}^{2},yxy\right],\:i=1,2,\cdots.
\end{aligned}
\right.\right\rangle .
\end{align*}

\noindent where $x_{i}$ is a word in $x$ and $y$ defined by (3).

\noindent \end{lemma}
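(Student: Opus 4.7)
The plan is to mirror the proof of Lemma 2.2 in Appendix~A, but starting from Cooper--Long's presentation of $B_{4,2}$ in \citep{MR1431138} instead of their presentation of $Q_{4,2}$. A more conceptual alternative, which I would develop in parallel, exploits the central extension
\[
1 \longrightarrow \langle z \rangle \longrightarrow B_{4,2} \xrightarrow{\ p|_{B_{4,2}}\ } Q_{4,2} \longrightarrow 1,
\]
where $z := t^{4}I_{3} = \beta_{4}(\Delta^{2})$ is central by (1) and corresponds to both $x^{4}$ and $y^{3}$, the two standard expressions for $\Delta^{2}$ in $B_{4}$. Since Lemma~2.2 already supplies a clean presentation of the quotient, the strategy is to lift each relator one by one: a relation $r = 1$ in $Q_{4,2}$ becomes a central relation $r = z^{k_{r}}$ in $B_{4,2}$, and $z$ is then eliminated via the identification $z = x^{4} = y^{3}$.

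The integers $k_{r}$ are read off by the determinant homomorphism $\det\colon B_{4,2} \to \mathbb{F}_{2}[t,t^{-1}]^{\times} = \langle t \rangle$. In characteristic two, $\det(\sigma_{i}) = t$, so $\det(z) = t^{12}$, and $k_{r}$ is one-twelfth of the $t$-exponent of $\det(r)$. For $y^{3}$, $\det(y^{3}) = t^{12}$ gives $y^{3} = z$, which combined with $z = x^{4}$ yields $x^{4}y^{-3} = 1$; the $i = 0$ case $x_{0}^{4} = x^{4}$ is absorbed by the same identification; for $i \ge 1$, each $x_{i}$ is an iterated commutator, so $\det(x_{i}) = 1$ and the relation $x_{i}^{4} = 1$ lifts unchanged; similarly each $[x_{i}^{2}, yxy]$ is a commutator with $\det = 1$, so it lifts unchanged. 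This reproduces exactly the relators in the statement of Lemma~3.2.

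The main obstacle will be justifying that these lifts together with $z = x^{4}$ give a complete presentation, with no stray relations. The cleanest formalization is the standard presentation theorem for central extensions: if $E$ is a central extension of $G$ by an infinite cyclic group generated by a word $w$ in the lifts of generators of $G$, then a presentation of $E$ is obtained from one of $G$ by replacing each relator $r$ with $r\,w^{-k_{r}}$ for the appropriate integer $k_{r}$. The hypotheses split into (i) centrality of $z$, which is (1); (ii) $\ker(p|_{B_{4,2}}) = \langle z \rangle$; and (iii) $z = x^{4}$ in $B_{4,2}$, which is the classical identity $\Delta^{2} = (\sigma_{1}\sigma_{2}\sigma_{3})^{4}$. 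Only (ii) requires genuine input, and it can be extracted from Cooper--Long's explicit analysis of $B_{4,2}$ in \citep{MR1431138}. If a more self-contained route is preferred, one can skip the central extension and apply Tietze transformations directly to Cooper--Long's presentation of $B_{4,2}$, verbatim paralleling the computation in Appendix~A.
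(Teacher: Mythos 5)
Your ``more conceptual alternative'' is exactly the paper's proof: it uses the central extension $1\to\langle\Delta^{2}\rangle\to B_{4,2}\to Q_{4,2}\to 1$, lifts the presentation of $Q_{4,2}$ from Lemma 2.2, pins down the central correction terms by the determinant (commutator words have determinant $1$, while $x^{4}=\Delta^{2}=y^{3}$), and eliminates the central generator by a Tietze transformation. The direct rewriting of Cooper--Long's presentation of $B_{4,2}$ that you offer as a fallback is precisely what the paper's Remark 3.3 mentions and sets aside in favor of this shorter route, so your proposal matches the paper's argument.
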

\begin{proof}

\noindent Consider the following exact sequence:
\begin{align}
1 & \longrightarrow\left\langle \Delta^{2}\right\rangle \longrightarrow B_{4,2}\longrightarrow Q_{4,2}\longrightarrow1,
\end{align}

\noindent where we have simiplified $\beta_{4}\otimes\mathbb{F}_{2}\left(\Delta^{2}\right)$
as $\Delta^{2}$ by abusing notation. From (6), we compute a presentation
for $B_{4,2}$ from that of $\left\langle \Delta^{2}\right\rangle ,$
that of $Q_{4,2}$, and the conjugation of generators of $B_{4,2}$
on $\Delta^{2}$. We know $\left\langle \Delta^{2}\right\rangle \cong\mathbb{Z}$,
and the presentation for $Q_{4,2}$ in Lemma 2.2, which employ the
generators $x,y$ and the following relations:
\begin{lyxlist}{00.00.0000}
\item [{$\left.1_{o}\right)$}] \noindent $1=y^{3},$
\item [{$\left.1_{x}\right)$}] \noindent $1=x_{i}^{4},$ for each integer
$i\ge0,$
\item [{$\left.2_{x}\right)$}] \noindent $1=\left[x_{i}^{2},yxy\right],$
for each integer $i\ge0.$
\end{lyxlist}
$\quad\;\:$As we take the preimage $x$ and $y$ in $B_{4,2}$ as
usual (see the matrices at the beginning in Appendix A), we compute
in $B_{4,2}$ that
\begin{align}
 & x^{4}=\Delta^{2}=y^{3},\\
 & 1=\left[x^{2},yxy\right],
\end{align}

\noindent for $\left.1_{o}\right)$ and the relations for $i=0$ in
$\left.1_{x}\right)$ and $\left.2_{x}\right)$. For each word $R$
in $x$ and $y$ for $i>0$ in the right hand side of $\left.1_{x}\right)$
and $\left.2_{x}\right)$, by (6) itself,
\begin{align*}
R & =\Delta^{2k},
\end{align*}

\noindent for some integer $k$. As a word in commutators written
in $x$ and $y$, the determinant of $R$ is 1, and $k$ must be 0.
Thus, we obtain
\begin{align}
 & 1=x_{i}^{4},\;1\le i,\\
 & 1=\left[x_{i}^{2},yxy\right],\;1\le i.
\end{align}

Finally, since $\Delta^{2}\in Z\left(B_{4,2}\right)$,
\begin{align}
\left[x,\Delta^{2}\right] & =\left[y,\Delta^{2}\right].
\end{align}

In summary, we have a presentation of the generators $x,y,\Delta^{2}$
and the relations (7), (8), (9), (10) and (11). By the substitution
of $\Delta^{2}$ in (7), deleting the generator $\Delta^{2}$ and
the relations (7) and (11), and adding a relation
\begin{align*}
1 & =x^{4}y^{-3},
\end{align*}

we have constructed the presentation of Lemma 3.2.$\qedhere$

\noindent \end{proof}
\begin{remark}

The difference between Lemma 2.2 and Lemma 3.2 follows from the fact
that the element $x=\beta_{4}\left(\sigma_{1}\sigma_{2}\sigma_{3}\right)$
has a nontrivial determinant $t^{3}$ in $B_{4,2}$, but $\mathrm{PSL}\left(3,\,\mathbb{F}_{2}\left(t\right)\right)$
includes its image $p\left(x\right)=p\left(t^{-1}x\right)$ in $Q_{4,2}$.
On the other hand, the following isomorphism induced by the natural
surjection $p|_{\mathrm{SL}\left(3,\mathbb{F}_{2}\left(t\right)\right)}$
\begin{align*}
\mathrm{SL}\left(3,\,\mathbb{F}_{2}\left(t\right)\right) & \cong\mathrm{PSL}\left(3,\,\mathbb{F}_{2}\left(t\right)\right)
\end{align*}

\noindent assures that the relations in Lemma 3.2 written in the commutators
$x_{1},x_{2},\cdots$ are satisfied. Therefore, one can deduce Lemma
3.2 by rewriting the presentation for $B_{4,2}$ in Cooper\textendash Long
\citep[Corollary 4.9]{MR1431138} as in Appendix A. We presented here
a simpler proof, by which $\mathrm{Lemma}\;3.2$ is a direct corollary
of Lemma 2.2.

\noindent \end{remark}
\begin{lemma}

\noindent For each positive integer $i$, let us define a subgroup
$\widetilde{S_{i}}$ to be generated by two elements $x_{i}$ and
$yxy$ in $B_{4,2}$. Then, we define a group isomorphism
\begin{align*}
 & \widetilde{\eta_{ij}}:\widetilde{S_{i}}\to\widetilde{S_{j}},\\
 & \widetilde{\eta_{ij}}\left(x_{i}\right)=x_{j},\;\widetilde{\eta_{ij}}\left(yxy\right)=yxy,
\end{align*}

\noindent for every pair $\left(i,j\right)$.

\noindent \end{lemma}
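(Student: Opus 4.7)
The plan is to mimic the proof of Lemma 2.3, but working directly inside $\mathrm{GL}(3, \mathbb{F}_{2}(t))$ instead of its projectivization, and conjugating by the same explicit matrix $M$ constructed there. First I would verify that conjugation by $M$ realizes $\widetilde{\eta_{i(i+1)}}$ for each $i \geq 1$; the general pair $(i,j)$ then follows by iterating or inverting this step.

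Recall from Lemma 2.3 the identities $MxM^{-1} = tx_{1}$ and $M(yxy)M^{-1} = yxy$ in $\mathrm{GL}(3, \mathbb{F}_{2}(t))$. In Lemma 2.3 the scalar $t$ is eliminated by projecting to $\mathrm{PGL}$; here, since $i \geq 1$, the generator $x_{i} = [yxy, x_{i-1}]$ of $\widetilde{S_{i}}$ is already a commutator, and scalars are central in $\mathrm{GL}$, so they cancel inside any commutator. The base case should compute as
\[
M x_{1} M^{-1} = [M(yxy)M^{-1},\, MxM^{-1}] = [yxy,\, tx_{1}] = [yxy,\, x_{1}] = x_{2},
\]
and then a straightforward induction on $i$ would yield $M x_{i} M^{-1} = x_{i+1}$ for all $i \geq 1$, exactly as at the end of the proof of Lemma 2.3.

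Given these two identities, conjugation by $M$ defines a group homomorphism $\widetilde{S_{i}} \to \mathrm{GL}(3, \mathbb{F}_{2}(t))$ whose image is the subgroup generated by $yxy$ and $x_{i+1}$, namely $\widetilde{S_{i+1}} \subset B_{4,2}$. Injectivity is automatic since conjugation by an invertible matrix is an automorphism of the ambient $\mathrm{GL}$. This realizes $\widetilde{\eta_{i(i+1)}}$; for arbitrary $(i,j)$ with $i \leq j$ I would compose $j - i$ copies of this map, and for $j < i$ conjugate by $M^{-1}$ instead.

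The main obstacle, really a conceptual one already foreshadowed in Remark 3.3, is the determinant bookkeeping that explains why the $Q_{4,2}$ argument does not literally descend to $B_{4,2}$: $\det x = t^{3} \neq 1$ whereas $\det x_{i} = 1$ for every $i \geq 1$, so no matrix in $\mathrm{GL}$ can conjugate $x$ to $x_{1}$. This forces us to restrict to $i \geq 1$ in Lemma 3.4 (unlike Lemma 2.3, where $x_{0} = x$ was permitted after projectivization), and is exactly why the leftover scalar $t$ in $MxM^{-1} = tx_{1}$ becomes harmless the moment we move only among commutators. Beyond this observation the argument is formal, and the deduction of Lemma 3.1 from Lemma 3.2 and Lemma 3.4, and thence of Theorem 1.3, proceeds word-for-word as in Section 2.
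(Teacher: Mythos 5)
Your proposal is correct and follows essentially the same route as the paper: conjugation by the explicit matrix $M$ from Lemma 2.3, with the stray central scalar $t$ in $MxM^{-1}=t\left[yxy,x\right]$ cancelling inside the commutator $x_{1}=\left[yxy,x\right]$, followed by the same induction to get $Mx_{i}M^{-1}=x_{i+1}$ for $i\ge1$. Your determinant observation explaining the restriction to $i\ge1$ is exactly the point of the paper's Remark 3.3.
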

\begin{proof}

\noindent As in Section 2, we use the linearity of $B_{4,2}$ to establish
Lemma 3.3. Recall for the matrices $x,y,M$ in $\mathrm{GL}\left(3,\,\mathbb{F}_{2}\left(t\right)\right)$,
as in the proof of Lemma 2.3, we obtain the following identities:
\begin{align*}
 & MxM^{-1}=t\left[yxy,x\right],\\
 & M\left(yxy\right)M^{-1}=yxy.
\end{align*}

Thus, expanding the commutator,
\begin{align*}
 & \,Mx_{1}M^{-1}\\
= & \,\left(yxy\right)^{-1}M\left(t^{-1}x\right)^{-1}M^{-1}\left(yxy\right)M\left(t^{-1}x\right)M^{-1}\\
= & \,\left[yxy,x_{1}\right].
\end{align*}

The rest of the proof is the same induction as that in the proof of
Lemma 2.3.$\qedhere$

\noindent \end{proof}
\begin{appendices}
	\section{Proof of Lemma 2.2}

\noindent The goal is to prove:

\noindent \begin{lemma22}

\noindent The group $Q_{4,2}$ is presented by the generators $x,y$
and the following relations:
\begin{lyxlist}{00.00.0000}
\item [{$\left.1_{o}\right)$}] \noindent $1=y^{3},$
\item [{$\left.1_{x}\right)$}] \noindent $1=x_{i}^{4},$ for each integer
$i\ge0,$
\item [{$\left.2_{x}\right)$}] \noindent $1=\left[x_{i}^{2},yxy\right],$
for each integer $i\ge0,$
\end{lyxlist}
\noindent where $x_{i}$ is a word in $x$ and $y$, recursively defined
as
\begin{align*}
x_{0} & :=x,\;x_{i+1}:=\left[yxy,x_{i}\right],\;i\ge0.
\end{align*}

\noindent \end{lemma22}

To prove Lemma 2.2, we need several auxiliary definitions and lemmas.
Throughout Appendix A we will only derive a set of relations from
another, and the sets supposed first will be given by Cooper\textendash Long
\citep{MR1431138}. Nevertheless, if a certain relation appears dubious,
a reader has the choice to check it through direct computation; here
are the matrix representations of $x$, $y$, and $yxy$ in $B_{4,2}\in\mathrm{GL}\left(3,\,\mathbb{F}_{2}\left[t,t^{-1}\right]\right)$.
\begin{align*}
x & =\beta_{4}\otimes\mathbb{F}_{2}\left(\sigma_{1}\sigma_{2}\sigma_{3}\right)=\left(\begin{array}{ccc}
0 & 0 & t\\
t & 0 & t\\
0 & t & t
\end{array}\right)=t\left(\begin{array}{ccc}
0 & 0 & 1\\
1 & 0 & 1\\
0 & 1 & 1
\end{array}\right),
\end{align*}

\begin{align*}
y & =\beta_{4}\otimes\mathbb{F}_{2}\left(\sigma_{1}\sigma_{2}\sigma_{3}\sigma_{1}\right)=\left(\begin{array}{ccc}
0 & 0 & t\\
t^{2} & t & t\\
0 & t & t
\end{array}\right)=t\left(\begin{array}{ccc}
0 & 0 & 1\\
t & 1 & 1\\
0 & 1 & 1
\end{array}\right),
\end{align*}

\begin{align*}
yxy & =\left(\begin{array}{ccc}
t^{4} & 0 & 0\\
t^{4} & t^{3}\left(1+t\right) & t^{4}\\
t^{4} & t^{3} & 0
\end{array}\right)=t^{4}\left(\begin{array}{ccc}
1 & 0 & 0\\
1 & 1+t^{-1} & 1\\
1 & t^{-1} & 0
\end{array}\right).
\end{align*}

We first review Cooper\textendash Long's presentation for finite subgroups
of $Q_{4,2}$. Borrowing notations used by Cooper\textendash Long
\citep{MR1431138}, we define a family of elements $\left\{ a_{i}\right\} _{i\ge0}$
in $Q_{4,2}$:
\begin{align*}
a_{i} & :=\left(yxy\right)^{i+1}x\left(yxy\right)^{-i-1},\;i=0,1,\cdots
\end{align*}

\noindent in $Q_{4,2}$, and a family of finite subgroups $\left\{ \mathrm{Stab}\left(7^{\left(i\right)}\right)\right\} _{i\ge0}$
of $Q_{4,2}$ as
\begin{align*}
\mathrm{Stab}\left(7^{\left(i\right)}\right) & :=\left\langle x,a_{0},a_{1},\cdots,a_{i}\right\rangle .
\end{align*}

\noindent \begin{theorem}

\noindent The group $\mathrm{Stab}\left(7^{\left(n\right)}\right)$
is presented by the generators $x,a_{0},a_{1},\cdots,a_{n}$ and the
following relations:
\begin{lyxlist}{00.00.0000}
\item [{$\left.1_{a}\right)$}] \noindent $1=x^{4},$
\item [{$\left.2_{a}\right)$}] \noindent $x^{2}=a_{0}^{2}=\cdots=a_{n}^{2},$
\item [{$\left.3_{a}\right)$}] \noindent $\left(xa_{i}\right)^{4}=1,\;i=0,1,\cdots,n,$
\item [{$\left.4_{a}\right)$}] \noindent $\left[a_{i},a_{j}\right]=\left[a_{i+k},a_{j+k}\right],\;0\le i<j,\;j+k\le n,$ 
\item [{$\left.5_{a}\right)$}] \noindent $\left[x,a_{i}\right]=\left[a_{k-1},a_{i+k}\right],\;0\le i,\;1\le k\le n-i.$
\end{lyxlist}
\noindent \end{theorem}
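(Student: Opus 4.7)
The plan is to establish the presentation in two steps: first verify that the relations $1_a$--$5_a$ hold in $\mathrm{Stab}(7^{(n)}) \subset Q_{4,2}$, giving a surjection $\Gamma_n \twoheadrightarrow \mathrm{Stab}(7^{(n)})$ from the abstractly presented group $\Gamma_n$; then show $|\Gamma_n| \le |\mathrm{Stab}(7^{(n)})|$ so that the surjection is an isomorphism.

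For the first step, I would use the explicit matrices of $x$, $y$, and $yxy$ listed at the start of Appendix A. Relation $1_a$ is immediate from $x^4 = t^4 I_3 \in \ker p$. Relation $2_a$ reduces to $a_0^2 = x^2$ together with the identity $[x^2, yxy] = 1$ (valid in $Q_{4,2}$), which lets one move $x^2$ past the conjugation by $(yxy)^{i+1}$. Relation $3_a$ is a single fourth-power identity for $i = 0$, from which the other cases follow by conjugation. Relations $4_a$ and $5_a$ encode the observation that conjugation by $yxy$ shifts the index $a_i \mapsto a_{i+1}$ on the defining expressions, so iterating this shift on a commutator produces the stated identities after reindexing.

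For the second step, I would proceed by induction on $n$. The base case $\Gamma_0 = \langle x, a_0 \mid x^4,\, a_0^2 x^{-2},\, (xa_0)^4\rangle$ presents the dicyclic group of order $16$, which can be matched to $\langle x, a_0\rangle \subset Q_{4,2}$ directly. For the inductive step, $4_a$ yields $[a_j, a_n] = [a_0, a_{n-j}] \in \Gamma_{n-1}$ for $1 \le j \le n-1$, and $5_a$ with $i = n-1,\, k = 1$ gives $[a_0, a_n] = [x, a_{n-1}] \in \Gamma_{n-1}$; combined with $a_n^2 = x^2$ from $2_a$, this shows that $a_n$ normalizes $\Gamma_{n-1}$ modulo a subgroup of order at most $2$. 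Hence $[\Gamma_n : \Gamma_{n-1}] \le 2$, and the matching lower bound follows from the matrix action of $\mathrm{Stab}(7^{(n)})$ on $\mathbb{P}^2$ over $\mathbb{F}_2(t)$.

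The main obstacle is handling the commutator $[x, a_n]$, which is not directly rewritten by $4_a$ or $5_a$ as stated (since $5_a$ requires $i + k \le n$). Resolving this requires an auxiliary identity derivable from the given relations, e.g., exploiting the symmetry $x \leftrightarrow a_0$ under shift or using the explicit order of $yxy$ in $Q_{4,2}$ to close a cycle. This bookkeeping is the delicate technical point of Cooper--Long's original argument; once it is in place, the induction closes and the surjection $\Gamma_n \twoheadrightarrow \mathrm{Stab}(7^{(n)})$ is forced to be an isomorphism.
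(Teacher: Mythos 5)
Your proposal takes a genuinely different route from the paper, and its key counting step is quantitatively broken. The paper does not reprove the presentation from scratch: it cites Cooper--Long's Theorem 4.7, which gives precisely the generators $x,a_0,\dots,a_n$ and the relations $1_a$--$5_a$ \emph{together with} the extra hypothesis that $\mathrm{Stab}(7^{(n)})$ is nilpotent of class $3$, and the entire content of the paper's proof is that this nilpotency hypothesis is redundant --- a fact deferred to Theorem A.4 via the change of generators $b_{i+1}=x^{-1}a_i$ (Lemma A.2) and the computation of the center (Lemma A.3). You instead propose to verify the relations in the matrix group and then bound $\lvert\Gamma_n\rvert$ by induction; that is far more than the theorem requires relative to the cited literature, and the induction as you set it up cannot close.

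The concrete failure is the index bound. By Lemma A.3 the group $\mathrm{Stab}(7^{(n)})$ fits in an extension of $E_{2^{n+2}}$ by $E_{2^{n+2}}$, so it has order $2^{2n+4}$ and the index of $\mathrm{Stab}(7^{(n-1)})=\langle x,a_0,\dots,a_{n-1}\rangle$ in $\mathrm{Stab}(7^{(n)})$ is $4$, not $2$. Your claimed bound $[\Gamma_n:\Gamma_{n-1}]\le 2$, fed through the induction from $\lvert\Gamma_0\rvert=16$, would give $\lvert\Gamma_n\rvert\le 2^{n+4}$, which for $n\ge 1$ is strictly smaller than the order of the group you surject onto --- an outright contradiction. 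The ``main obstacle'' you flag, the commutator $[x,a_n]$, is not bookkeeping to be resolved by an auxiliary identity: in the notation of Lemma A.2 one has $[x,a_n]=[x,b_{n+1}]=b_{n+1}^{2}$, a central involution that does \emph{not} lie in $\langle x,a_0,\dots,a_{n-1}\rangle$ (the centers grow from $E_{2^{n+1}}$ to $E_{2^{n+2}}$ exactly by adjoining it). So $a_n$ genuinely fails to normalize $\Gamma_{n-1}$, and any relation rewriting $[x,a_n]$ inside $\Gamma_{n-1}$ would collapse the group; the correct filtration has two new cosets at each stage, one from $a_n$ and one from $[x,a_n]$. Two smaller slips: $\Gamma_0$ has order $16$ but is not dicyclic ($Q_{16}$ has a unique involution, whereas this group's center is $E_4$); and relation $3_a$ for $i>0$ does not follow from the case $i=0$ by conjugating with $yxy$, since that conjugation sends $x$ to $a_0$, yielding $(a_0a_{i+1})^4=1$ rather than $(xa_{i+1})^4=1$.
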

\begin{proof}

\noindent In \citep[Theorem 4.7]{MR1431138}, $\mathrm{Stab}\left(7^{\left(n\right)}\right)$
is presented by the generators $x,a_{0},a_{1},\cdots,a_{n}$ and the
relations $\left.1_{a}\right)\text{\textendash}\left.5_{a}\right)$
with a condition that $\mathrm{Stab}\left(7^{\left(n\right)}\right)$
is nilpotent of class 3. Thus, it suffices to show that the nilpotency
follows from $\left.1_{a}\right)\text{\textendash}\left.5_{a}\right)$,
which is established by Theorem A.4 later.$\qedhere$

\noindent \end{proof}

From now on, we temporarily suppose that $\mathrm{Stab}\left(7^{\left(n\right)}\right)$
is the group presented as Theorem A.1 without assuming the nilpotency
(see the proof above) until we prove Lemma A.2, Lemma A.3 and Theorem
A.4. The relations in Theorem A.1 for every $n$ as a whole consist
of an infinite sequence of relations of the presentation for $Q_{4,2}$,
which will be used in Theorem A.5.

As we will see, we need to analyze the center of $\mathrm{Stab}\left(7^{\left(n\right)}\right)$
to prove Theorem A.4. To do this, we transform the presentation of
Theorem A.1 by newly defined generators
\begin{align*}
b_{i+1} & :=x^{-1}a_{i},
\end{align*}

\noindent for each $i\ge0$. From now on, we say two group presentations
are \emph{equivalent} if the two presented groups are isomorphic.
To prove the equivalence, we rely on the Tietze transformation.

\noindent \begin{lemma}

\noindent The group $\mathrm{Stab}\left(7^{\left(n\right)}\right)$
is presented by the generators $x,b_{1},b_{2},\cdots,b_{n+1}$ and
the following relations:
\begin{lyxlist}{00.00.0000}
\item [{$\left.1_{b}\right)$}] \noindent $1=x^{4}=b_{i}^{4},\;i=1,\cdots,n+1,$
\item [{$\left.2_{b}\right)$}] \noindent $\left[x,b_{i}\right]=b_{i}^{2},\;i=1,\cdots,n+1,$
\item [{$\left.3_{b}\right)$}] \noindent $\left[b_{i},b_{j}\right]=b_{i}^{2}b_{j-i}^{2}b_{j}^{2},\;1\le i<j\le n+1.$
\end{lyxlist}
\noindent \end{lemma}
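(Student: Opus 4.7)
The plan is to apply Tietze transformations to the presentation in Theorem A.1. I would add the generators $b_{i+1}:=x^{-1}a_i$ for $0\le i\le n$ (equivalently, insert the defining relations and solve for $a_i = xb_{i+1}$). By standard Tietze theory, this yields a presentation in $x, b_1, \ldots, b_{n+1}$ whose relations are obtained by rewriting each of $1_a)$--$5_a)$ in terms of the new generators. The goal then is to verify that this rewritten system is equivalent, as a set of relations, to $1_b)$--$3_b)$; equivalence will be established by deriving each side from the other.

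The forward direction starts with the "single-index" relations. From $2_a)$ we have $a_i^2 = x^2$, so substituting $a_i = xb_{i+1}$ and using $1_a)$ gives after a short rearrangement the conjugation identity $xb_{i+1}x^{-1} = b_{i+1}^{-1}$, which is equivalent to $2_b)$: $[x, b_{i+1}] = b_{i+1}^2$. A consequence is that $x^2$ is central in $\mathrm{Stab}(7^{(n)})$ (since $x^2 = a_j^2$ for every $j$ and $x^2$ visibly commutes with $x$). Then $3_a)$ becomes $(xa_i)^4 = (x^2 b_{i+1})^4$, and centrality of $x^2$ together with $x^4=1$ reduces this to $b_{i+1}^4 = 1$, yielding $1_b)$. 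The "cross-index" relation $3_b)$ is derived by expanding $[b_i, b_j] = [x^{-1}a_{i-1}, x^{-1}a_{j-1}]$ and repeatedly applying $2_b)$ to replace conjugations by $x$ with multiplications by $b_k^2$; the outcome can be rewritten as $[a_{i-1}, a_{j-1}]$ times correction terms, and then $5_a)$ (with $k = i-1$) identifies $[a_{i-1}, a_{j-1}]$ with $[x, a_{j-i-1}]$, which under the substitution is $b_{j-i}^2$. This finally produces the product $b_i^2 b_{j-i}^2 b_j^2$.

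For the converse direction, I would substitute $a_i = xb_{i+1}$ into $1_a)$--$5_a)$ and check each is a consequence of $1_b)$--$3_b)$. Relations $1_a)$, $2_a)$, $3_a)$ are immediate from $1_b)$ and $2_b)$ (reversing the forward computation). Relation $4_a)$ asserts shift-invariance of commutators $[a_i, a_j]$, which in the new generators is encoded directly in $3_b)$ through the dependence on $j-i$ alone (the side factors $b_i^2, b_j^2$ cancel appropriately in the comparison $[a_i, a_j]$ vs.\ $[a_{i+k}, a_{j+k}]$, using that squares $b_k^2$ commute with all generators, as follows from $2_b)$ and $3_b)$). Relation $5_a)$ is recovered as a special case by taking $i = k-1$ and comparing with $[x, a_{i+k}] = b_{i+k+1}^2$ via $2_b)$.

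The main obstacle, and where the bookkeeping gets subtle, is the derivation of $3_b)$ from $5_a)$ (and conversely the recovery of $4_a)$, $5_a)$ from $3_b)$). Both hinge on the following structural fact, which I would isolate as an intermediate step: in the group presented by $1_b)$--$3_b)$, every square $b_k^2$ is central. This follows from $2_b)$ (giving $[x, b_k^2] = 1$, since $xb_k^2 x^{-1} = (xb_kx^{-1})^2 = b_k^{-2} = b_k^2$ once $b_k^4 = 1$ is used) together with $3_b)$ (which forces $[b_i^2, b_j] = 1$ after expansion). Once squares are known to be central, the commutator calculations collapse to straightforward exponent arithmetic, and the equivalence of the two presentations drops out.
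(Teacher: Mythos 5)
Your overall architecture---introduce $b_{i+1}=x^{-1}a_{i}$ by a Tietze transformation and verify that the two relation sets imply each other---is exactly the paper's, and your treatment of the single-index relations is correct (your direct derivation of $[x,b_{i+1}]=b_{i+1}^{2}$ from $xb_{i+1}x^{-1}=b_{i+1}^{-1}$, and of $b_{i+1}^{4}=1$ from $(x^{2}b_{i+1})^{4}=1$ via centrality of $x^{2}$, is if anything slightly cleaner than the paper's route through $[x,b_{i}^{-1}]=b_{i}^{-2}$). The gap is in the ``structural fact'' you lean on for the $5_{a})\leftrightarrow 3_{b})$ translation: you assert that $3_{b})$ ``forces $[b_{i}^{2},b_{j}]=1$ after expansion.'' It does not, at least not by expansion. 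From $3_{b})$ one gets $b_{j}^{-1}b_{i}b_{j}=b_{i}^{-1}b_{j-i}^{2}b_{j}^{2}$, hence $b_{j}^{-1}b_{i}^{2}b_{j}=\left(b_{i}^{-1}b_{j-i}^{2}b_{j}^{2}\right)^{2}$, and to collapse this to $b_{i}^{2}$ you must already know that $b_{j-i}^{2}b_{j}^{2}$ commutes with $b_{i}$ --- which is precisely the statement being proved. Centrality of the squares $b_{k}^{2}$ is true, but the paper only establishes it later (Claim 1 in the proof of Lemma A.3) by an induction on $n$ that uses the rearrangement $(b_{i}b_{N+2}^{-1})^{2}=b_{N-i+2}^{2}$; it is not a formal consequence of merely expanding $3_{b})$.

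Fortunately you do not need it. In both directions of the $5_{a})\leftrightarrow 3_{b})$ step, and in recovering $4_{a})$ (which is in any case redundant given $5_{a})$, since both sides of $4_{a})$ equal $[x,a_{j-i-1}]$ by two applications of $5_{a})$), the only commutation facts required are $[x^{2},b_{i}]=1$ and $[x,b_{i}^{2}]=1$: one only ever has to push a conjugation by $x$, never by some $b_{j}$, through a product of squares. Both identities follow in a line or two from $2_{a})$ together with $b_{i}^{4}=1$, and this is exactly what the paper's proof uses. If you replace your centrality claim by these two weaker identities, your argument closes and coincides with the paper's. (Minor indexing slip: to identify $[a_{i-1},a_{j-1}]$ with $[x,a_{j-i-1}]$ via $5_{a})$ you need $k=i$, not $k=i-1$.)
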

\begin{proof}

\noindent We prove that the presentation of Theorem A.1 is equivalent
to that of Lemma A.2, by substitution $b_{i+1}=x^{-1}a_{i}$.

At first, we suppose $\left.1_{a}\right)\text{\textendash}\left.5_{a}\right)$
and show $\left.1_{b}\right)\text{\textendash}\left.3_{b}\right)$.
From $\left.2_{a}\right)$, we directly see
\begin{equation}
x^{-1}b_{i}^{-1}=b_{i}x^{-1}.
\end{equation}

From (12), we obtain
\begin{align}
\left[x^{2},b_{i}\right] & =1,
\end{align}

\noindent for each $i$, since
\begin{align*}
\left[x^{2},b_{i}\right] & =x^{-2}b_{i}^{-1}x^{2}b_{i}=x^{-1}b_{i}xb_{i}=1.
\end{align*}

Thus, we deduce $b_{i}^{4}=1$ from $\left.3_{a}\right)$, and establish
$\left.1_{b}\right)$. Furthermore, we obtain
\begin{align}
\left[x,b_{i}^{2}\right] & =1.
\end{align}

Indeed, from (12) and $b_{i}^{4}=1$,
\begin{align*}
\left[x,b_{i}^{2}\right] & =x^{-1}b_{i}^{-2}xb_{i}^{2}=x^{-1}b_{i}^{-1}xb_{i}^{3}=x^{-1}b_{i}^{-1}xb_{i}^{-1}=1.
\end{align*}

Now, rearranging $\left.2_{a}\right)$ as $\left[x,b_{i}^{-1}\right]=b_{i}^{-2}$,
and using $\left.1_{b}\right)$ and (14), we obtain $\left.2_{b}\right)$:
\[
\left[x,b_{i}\right]=b_{i}^{2}.
\]

To rewrite $\left.5_{a}\right)$ in terms of $x$ and $b_{i}$, note
that
\begin{align*}
 & \left[x,xb_{i}\right]=\left[x,b_{i}\right],\\
 & \left[xb_{k},xb_{i+k+1}\right]=\left[b_{k},b_{i+k+1}x\right]\left[x,b_{i+k+1}\right]=\left[b_{k},x\right]x^{-1}\left[b_{k},b_{i+k+1}\right]x\left[x,b_{i+k+1}\right],
\end{align*}

\noindent where the second row follows from the commutator identity
$\left[a,bc\right]=\left[a,c\right]c^{-1}\left[a,b\right]c$. From
$\left.2_{b}\right)$, we rewrite $\left.5_{a}\right)$ as
\begin{align*}
\left[b_{k},b_{i+k+1}\right] & =x\left(b_{k}^{2}b_{i+1}^{2}b_{i+k+1}^{-2}\right)x^{-1},
\end{align*}

\noindent and by $\left.1_{b}\right)$ and (14), we finally obtain
$\left.3_{b}\right)$.

On the other hand, we suppose $\left.1_{b}\right)\text{\textendash}\left.3_{b}\right)$
and show $\left.1_{a}\right)\text{\textendash}\left.5_{a}\right)$.
The first thing we notice is the redundancy of $\left.4_{a}\right)$;
we already have two relations implying $\left.4_{a}\right)$ from
$\left.5_{a}\right)$:
\[
\left[x,a_{j-i-1}\right]=\left[a_{i},a_{j}\right],\;\left[x,a_{j-i-1}\right]=\left[a_{i+k},a_{j+k}\right].
\]

Under the conditions (13) and (14), doing the exact opposite of the
calculation we have done before, $\left.1_{a}\right)$ and $\left.3_{a}\right)$
follow from $\left.1_{b}\right)$; $\left.2_{a}\right)$ follows from
$\left.2_{b}\right)$; and $\left.5_{a}\right)$ follows from $\left.2_{b}\right)$
and $\left.3_{b}\right)$. Thus, we only need to show (13) and (14).
Expanding the commutator in the left hand side of (13), we have
\begin{align*}
 & \,\left[x^{2},b_{i}\right]\\
= & \,x^{-2}b_{i}^{-1}x^{2}b_{i}=x^{-1}\left[x,b_{i}\right]b_{i}^{-1}xb_{i}\\
= & \,x^{-1}b_{i}xb_{i}=b_{i}\left[b_{i},x\right]b_{i}=1,
\end{align*}

\noindent where the third equality and the fourth follow from $\left.2_{b}\right)$.
Expanding (14), we have
\begin{align*}
 & \,\left[x,b_{i}^{2}\right]\\
= & \,x^{-1}b_{i}^{-2}xb_{i}^{2}=x^{-1}b_{i}^{-1}x\left[x,b_{i}\right]b_{i}\\
= & \,x^{-1}b_{i}^{-1}xb_{i}^{3}=\left[x,b_{i}\right]b_{i}^{2}=b_{i}^{4}=1,
\end{align*}

\noindent which the third equality follows from $\left.2_{b}\right)$,
the fourth from $\left.2_{b}\right)$, and the last from $\left.1_{b}\right)$.$\qedhere$

\noindent \end{proof}

Lemma A.2 facilitates determining which elements are in the center.
Put
\begin{align*}
E_{2^{i}}
\end{align*}

\noindent as the elementary abelian group of order $2^{i}$.

\noindent \begin{lemma}

\noindent The center of $\mathrm{Stab}\left(7^{\left(n\right)}\right)$
is generated by $x^{2},b_{1}^{2},\cdots,b_{n+1}^{2}$, and isomorphic
to $E_{2^{n+2}}$. Moreover, it admits a short exact sequence:
\begin{align*}
1 & \longrightarrow E_{2^{n+2}}\longrightarrow\mathrm{Stab}\left(7^{\left(n\right)}\right)\longrightarrow E_{2^{n+2}}\longrightarrow1.
\end{align*}

\noindent In particular, for each element $g\in\mathrm{Stab}\left(7^{\left(n\right)}\right)$,
$g^{2}$ belongs to the center, and the exponent of $\mathrm{Stab}\left(7^{\left(n\right)}\right)$
is 4.

\noindent \end{lemma}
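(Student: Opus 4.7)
The plan is to prove centrality of the proposed generators first, and then deduce the short exact sequence and all orders by a count.

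First, I would verify that each of $x^2, b_1^2, \ldots, b_{n+1}^2$ lies in the center of $\mathrm{Stab}(7^{(n)})$. That $x^2$ commutes with every $b_i$ is equation (13) in the proof of Lemma A.2, and that each $b_i^2$ commutes with $x$ is equation (14); the only substantive task is $[b_i^2, b_j] = 1$ for $i \ne j$. Rewriting $3_b$ for $i < j$ gives the conjugation formula
\[
b_j^{-1} b_i b_j \;=\; b_i^{-1}\, b_{j-i}^2\, b_j^2,
\]
and squaring produces $b_j^{-1} b_i^2 b_j = (b_i^{-1} b_{j-i}^2 b_j^2)^2$; if the inner squares $b_{j-i}^2$ and $b_j^2$ commute with $b_i$, the right-hand side telescopes to $b_i^{-2} = b_i^2$ by the order-four relations. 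This motivates a strong induction on $\max(i,j)$, since the factor $b_{j-i}^2$ has strictly smaller index max.

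The main obstacle is that the factor $b_j^2$ sits at the same level $\max(i,j)$ as the claim itself, so a naive induction hits a circular dependence between $[b_i^2, b_j] = 1$ and $[b_j^2, b_i] = 1$. I would resolve this by a simultaneous induction that extracts both symmetric statements at each level from a single manipulation (by also rearranging $3_b$ into $b_k^{-1} b_l^{-1} b_k = b_k^2 b_{l-k}^2 b_l$ and squaring that). As a mechanical safety net, each $b_i = x^{-1}(yxy)^{i}x(yxy)^{-i}$ is an explicit matrix in $\mathrm{GL}(3, \mathbb{F}_2(t))$ via the formulas at the start of Appendix A, so the required commutations can alternatively be verified entry by entry.

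With centrality of $Z_n := \langle x^2, b_1^2, \ldots, b_{n+1}^2\rangle$ established, everything else is routine. Relations $2_b$ and $3_b$ force every commutator of $\mathrm{Stab}(7^{(n)})$ into $Z_n$, so $\mathrm{Stab}(7^{(n)})/Z_n$ is abelian of exponent at most $2$, generated by $n+2$ classes, hence $|\mathrm{Stab}(7^{(n)})/Z_n| \le 2^{n+2}$; similarly $Z_n$ is elementary abelian of rank at most $n+2$. For the matching lower bounds, the abelianization of $\mathrm{Stab}(7^{(n)})$ read directly off the presentation is $\mathbb{Z}/4 \times (\mathbb{Z}/2)^{n+1}$, whose quotient by the image of $x^2$ is $(\mathbb{Z}/2)^{n+2}$, and the $\mathbb{F}_2$-independence of the matrices $x^2, b_1^2, \ldots, b_{n+1}^2$ in $\mathrm{PGL}(3, \mathbb{F}_2(t))$ gives the full rank of $Z_n$. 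To upgrade $Z_n \subseteq Z(\mathrm{Stab}(7^{(n)}))$ to equality, note that any non-identity coset in $\mathrm{Stab}(7^{(n)})/Z_n$ is represented by some word $x^{e_0}\prod b_i^{e_i}$ that fails to commute with one of the generators, as is visible from $[x, b_i] = b_i^2 \ne 1$ and analogous checks. The short exact sequence, the property $g^2 \in Z_n$ for every $g$, and the exponent bound of $4$ follow at once.
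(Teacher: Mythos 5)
Your overall architecture matches the paper's: reduce everything to the centrality of the $b_i^2$, then use the normal form $C(g)\,x^{e_0}b_1^{e_1}\cdots b_{n+1}^{e_{n+1}}$ and a centralizer computation to show the center is no larger, and read off the exact sequence and the exponent. You also correctly isolate the crux, namely deriving $[b_i^2,b_j]=1$ from $\left.1_{b}\right)$--$\left.3_{b}\right)$ alone, and the circularity that squaring the conjugation relation runs into. But the resolution you sketch does not go through. Squaring $b_j^{-1}b_ib_j=b_i^{-1}b_{j-i}^2b_j^2$ telescopes only if $[b_j^2,b_i]=1$ and $[b_{j-i}^2,b_j]=1$; squaring your second rearrangement $b_k^{-1}b_l^{-1}b_k=b_k^2b_{l-k}^2b_l$ telescopes only if $[b_k^2,b_l]=1$ and $[b_{l-k}^2,b_l]=1$. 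Every one of these prerequisites sits at the same level $\max(i,j)$ as the statement being proved, so the ``simultaneous induction'' still contains a cycle and never gets off the ground. The missing idea is the paper's identity (15): from $\left.1_{b}\right)$ and $\left.3_{b}\right)$ alone, with no commutativity assumptions, one computes $\left(b_ib_{N+2}^{-1}\right)^2=b_{N-i+2}^2$, and since a square automatically commutes with its base, $b_{N-i+2}^2$ commutes with $b_ib_{N+2}^{-1}$ for free; combined with the inductive centrality of $b_{N-i+2}^2$ at level $N$ this yields $[b_{N-i+2}^2,b_{N+2}]=1$ first, and only afterwards is $[b_{N+2}^2,b_{N-i+2}]=1$ derived. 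Some such identity is what breaks the cycle, and your write-up does not supply one.

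The ``safety net'' of checking the commutations on explicit matrices is not admissible for this lemma. Lemma A.3 concerns the group \emph{presented} by $\left.1_{b}\right)$--$\left.3_{b}\right)$ --- the paper explicitly suspends the identification of $\mathrm{Stab}\left(7^{(n)}\right)$ with its matrix realization until Theorem A.4 is proved --- because the entire purpose of Lemmas A.2--A.4 is to show that nilpotency is a \emph{consequence} of Cooper--Long's relations, so that their presentation may be used without the added nilpotency hypothesis. A matrix computation only verifies that the desired relations hold in the image $Q_{4,2}$, i.e.\ in a quotient of the presented group; it cannot show that they follow from the given relations. (By contrast, your use of the matrix group to get the \emph{lower} bound on the rank of $\left\langle x^2,b_1^2,\cdots,b_{n+1}^2\right\rangle$, and the abelianization count for the quotient, are legitimate, since a quotient does give lower bounds.)
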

\begin{proof}

\noindent From the proof of Lemma A.2, the center includes $x^{2}$
from (13). At first, we need to show each element $b_{i}^{2}$ is
central.

\noindent \begin{claim1}

\noindent Each element $b_{i}^{2}$ such that $1\le i\le n+1$ belongs
to the center $Z\left(\mathrm{Stab}\left(7^{\left(n\right)}\right)\right)$.

\noindent \end{claim1}
\begin{proofclaim1}

\noindent We use induction on $n$. For $n=1$, the case is trivial
from (14). Suppose $b_{i}^{2}\in Z\left(\mathrm{Stab}\left(7^{\left(N\right)}\right)\right)$
for each $1\le i\le N+1$. Rearrange the equation $\left.3_{b}\right)$
as
\begin{align}
\left(b_{i}b_{N+2}^{-1}\right)^{2} & =b_{N-i+2}^{2}.
\end{align}

The identity (15) implies that
\begin{align*}
\left[b_{N-i+2}^{2},b_{N+2}b_{i}^{-1}\right] & =1,
\end{align*}

\noindent and by applying the commutator identity $\left[a,bc\right]=\left[a,c\right]c^{-1}\left[a,b\right]c$
to the commutator in the left hand side, the induction hypothesis
$\left[b_{N-i+2}^{2},b_{i}\right]=1$ implies that for each $1\le i\le N+1$,
\begin{align}
\left[b_{N-i+2}^{2},b_{N+2}\right] & =1.
\end{align}

Now, we only need to show that for each $1\le i\le N+1$,
\begin{equation}
\left[b_{N+2}^{2},b_{N-i+2}\right]=1.
\end{equation}

By the definition of the commutators, we expand
\begin{align*}
 & \,\left[b_{N+2}^{2},b_{N-i+2}\right]\\
= & \,b_{N+2}^{-2}b_{N-i+2}^{-1}b_{N+2}^{2}b_{N-i+2}=b_{N+2}^{-1}\left[b_{N+2},b_{N-i+2}\right]b_{N-i+2}^{-1}b_{N+2}b_{N-i+2}.
\end{align*}

From $\left.3_{b}\right)$,
\begin{align*}
 & \,b_{N+2}^{-1}\left[b_{N+2},b_{N-i+2}\right]b_{N-i+2}^{-1}b_{N+2}b_{N-i+2}\\
= & \,b_{N+2}b_{i}^{2}b_{N-i+2}b_{N+2}b_{N-i+2}.
\end{align*}

Finally, (16) and (15) imply
\begin{align*}
 & \,b_{N+2}b_{i}^{2}b_{N-i+2}b_{N+2}b_{N-i+2}\\
= & \,b_{i}^{2}\left(b_{N+2}b_{N-i+2}^{-1}\right)^{2}=1,
\end{align*}

\noindent which demonstrates (17). This concludes the proof of Claim
1.$\qed$

\noindent \end{proofclaim1}

We return to the proof of Lemma A.3. Now, it suffices to show
\begin{align*}
Z\left(\mathrm{Stab}\left(7^{\left(n\right)}\right)\right) & \subset\left\langle x^{2},b_{1}^{2},\cdots,b_{n+1}^{2}\right\rangle .
\end{align*}

Choose an element $g\in\mathrm{Stab}\left(7^{\left(n\right)}\right)$.
Since $\left\langle x^{2},b_{1}^{2},\cdots,b_{n+1}^{2}\right\rangle \subset Z\left(\mathrm{Stab}\left(7^{\left(n\right)}\right)\right)$,
we rearrange $g$ as
\begin{align*}
g & =C\left(g\right)x^{i_{0}}b_{1}^{i_{1}}b_{2}^{i_{2}}\cdots b_{n+1}^{i_{n+1}},
\end{align*}

\noindent where $C\left(g\right)\in\left\langle x^{2},b_{1}^{2},\cdots,b_{n+1}^{2}\right\rangle $
and $i_{j}\in\left\{ 0,1\right\} $ for $0\le j\le n+1$, by successive
application of $\left.2_{b}\right)$ and $\left.3_{b}\right)$. Therefore,
the centeralizer of $x$ is $\left\langle x,b_{1}^{2},\cdots,b_{n+1}^{2}\right\rangle $,
and that of $b_{1}$ is $\left\langle x^{2},b_{1},b_{2}^{2}\cdots,b_{n+1}^{2}\right\rangle $,
each of which has $\left\langle x^{2},b_{1}^{2},\cdots,b_{n+1}^{2}\right\rangle $
as a subgroup of index 2; thus the intersection $\left\langle x^{2},b_{1}^{2},\cdots,b_{n+1}^{2}\right\rangle $
contains the center.$\qedhere$

\noindent \end{proof}
\begin{theorem}

\noindent The derived subgroup of $\mathrm{Stab}\left(7^{\left(n\right)}\right)$
is generated by $b_{1}^{2},\cdots,b_{n+1}^{2}$, and isomorphic to
$E_{2^{n+1}}$. Moreover, it admits a short exact sequence:
\begin{align*}
1 & \longrightarrow E_{2^{n+1}}\longrightarrow\mathrm{Stab}\left(7^{\left(n\right)}\right)\longrightarrow\mathbb{Z}/4\mathbb{Z}\times E_{2^{n+1}}\longrightarrow1.
\end{align*}

\noindent In particular, the group $\mathrm{Stab}\left(7^{\left(n\right)}\right)$
is nilpotent of class 2 for each $n\ge0$.

\noindent \end{theorem}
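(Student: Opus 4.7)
The plan is to extract the three assertions of Theorem A.4 from Lemmas A.2 and A.3 in a single sweep. First I would identify the derived subgroup with $N := \langle b_1^2, \ldots, b_{n+1}^2 \rangle$. The containment $N \subseteq \left[\mathrm{Stab}(7^{(n)}), \mathrm{Stab}(7^{(n)})\right]$ is immediate from relation $2_b$, which gives $b_i^2 = [x, b_i]$ for each $i$. For the reverse containment, relations $2_b$ and $3_b$ together show that every commutator between two of the generators $x, b_1, \ldots, b_{n+1}$ already lies in $N$; since Claim 1 inside the proof of Lemma A.3 places $N$ in the center, $N$ is in particular normal, and so it contains the normal closure of these generating commutators, i.e.\ the whole derived subgroup.

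Next I would determine the isomorphism type of $N$. Lemma A.3 asserts that $Z(\mathrm{Stab}(7^{(n)})) \cong E_{2^{n+2}}$ and is generated by the $n+2$ elements $x^2, b_1^2, \ldots, b_{n+1}^2$. In an elementary abelian $2$-group, any generating set whose cardinality equals the rank is automatically an $\mathbb{F}_2$-basis, so $b_1^2, \ldots, b_{n+1}^2$ are $\mathbb{F}_2$-linearly independent and hence $N \cong E_{2^{n+1}}$.

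For the short exact sequence I would compute the abelianization $\mathrm{Stab}(7^{(n)}) / N$ directly from the presentation of Lemma A.2. After killing commutators, relation $2_b$ collapses to $b_i^2 = 1$; relation $3_b$ becomes trivial; and relation $1_b$ contributes only $x^4 = 1$, with the relations $b_i^4 = 1$ then automatic. Hence the abelianization is $\mathbb{Z}/4\mathbb{Z} \times E_{2^{n+1}}$, yielding the displayed exact sequence. Finally, since $N$ sits inside the center and is nontrivial for every $n \geq 0$, the nilpotency class of $\mathrm{Stab}(7^{(n)})$ is exactly $2$.

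The only step that requires any real input is the $\mathbb{F}_2$-independence of $b_1^2, \ldots, b_{n+1}^2$, and that is handed to us directly by the order computation $|Z(\mathrm{Stab}(7^{(n)}))| = 2^{n+2}$ already carried out in Lemma A.3; once that is granted, everything else is a matter of reading off the presentation.
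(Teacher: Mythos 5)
Your proposal is correct and follows essentially the same route as the paper: lower containment via $\left.2_{b}\right)$, upper containment by showing the quotient by $\left\langle b_{1}^{2},\cdots,b_{n+1}^{2}\right\rangle$ is abelian (equivalently, that this central subgroup contains all generator commutators), and the isomorphism type and exact sequence read off from Lemma A.3 and the presentation. You merely supply two details the paper leaves implicit --- the $\mathbb{F}_{2}$-independence of $b_{1}^{2},\cdots,b_{n+1}^{2}$ inside the center and the explicit computation of the abelianization --- both of which are sound.
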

\begin{proof}

\noindent By Lemma A.3, the subgroup $\left\langle b_{1}^{2},\cdots,b_{n+1}^{2}\right\rangle $
is central, and isomorphic to $E_{2^{n+1}}$. From the relation $\left.2_{b}\right)$,
it is contained in the derived subgroup. By the relations $\left.2_{b}\right)$
and $\left.3_{b}\right)$, the quotient
\[
\mathrm{Stab}\left(7^{\left(n\right)}\right)/\left\langle b_{1}^{2},\cdots,b_{n+1}^{2}\right\rangle 
\]

\noindent is abelian, which implies that $\left\langle b_{1}^{2},\cdots,b_{n+1}^{2}\right\rangle $
is exactly the derived subgroup.$\qedhere$

\noindent \end{proof}

Here we have established the nilpotency of $\mathrm{Stab}\left(7^{\left(n\right)}\right)$
based on the presentation given in Theorem A.1, as mentioned earlier.
In fact, our result on the nilpotency is stronger than what Cooper\textendash Long
assumed.

Now, let us consider the whole group $Q_{4,2}$. Cooper\textendash Long
\citep[Theorem 4.8]{MR1431138} gives a presentation for $Q_{4,2}$
that included the nilpotency of class 3 on the subgroup $\left\langle x,a_{i},i\ge0\right\rangle $.
However, we slightly modify the presentation by not requiring the
nilpotency explicitly, as we have done in Theorem A.4.

\noindent \begin{theorem}

\noindent The group $Q_{4,2}$ is presented by the generators $x,y$
and the following relations:
\begin{lyxlist}{00.00.0000}
\item [{$\left.1_{o}\right)$}] \noindent $1=y^{3},$
\item [{$\left.2_{o}\right)$}] \noindent $1=\left[x^{2},yxy\right],$
\item [{$\left.1_{b}\right)$}] \noindent $1=x^{4}=b_{i}^{4},$ for each
integer $i\ge1,$
\item [{$\left.2_{b}\right)$}] \noindent $\left[x,b_{i}\right]=b_{i}^{2},$
for each integer $i\ge1,$
\item [{$\left.3_{b}\right)$}] \noindent $\left[b_{i},b_{j}\right]=b_{i}^{2}b_{j-i}^{2}b_{j}^{2},\;1\le i<j,$
\end{lyxlist}
\noindent where $b_{i}$ is a word in $x$ and $y$, defined as $b_{i}=\left[x,\left(yxy\right)^{-i}\right]$
for each integer $i\ge1$.

\noindent \end{theorem}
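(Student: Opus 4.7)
The plan is to derive the presentation in Theorem A.5 from Cooper--Long's presentation \citep[Theorem 4.8]{MR1431138} of $Q_{4,2}$ via the same Tietze transformation used in Lemma A.2, combined with the strengthened nilpotency statement proved in Theorem A.4. Cooper--Long's presentation uses generators $x, y$, auxiliary words $a_i := (yxy)^{i+1} x (yxy)^{-i-1}$, and includes the relations $y^{3}=1$, $[x^{2}, yxy] = 1$, the families $\left.1_{a}\right)$--$\left.5_{a}\right)$ of Theorem A.1 for all $n$, and an extra axiom that $\langle x, a_i : i \geq 0 \rangle$ is nilpotent of class $3$.

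My first move would be to perform the substitution $b_{i+1} := x^{-1} a_i$ (equivalently $a_i = x b_{i+1}$) uniformly in $i$. For each $n$, Lemma A.2 shows that the relations $\left.1_{a}\right)$--$\left.5_{a}\right)$ on $x, a_{0}, \ldots, a_{n}$ are equivalent to the relations $\left.1_{b}\right)$--$\left.3_{b}\right)$ on $x, b_{1}, \ldots, b_{n+1}$. Because the substitution is compatible with the inclusions $\mathrm{Stab}\left(7^{(n)}\right) \hookrightarrow \mathrm{Stab}\left(7^{(n+1)}\right)$, the global family of relations $\left.1_{a}\right)$--$\left.5_{a}\right)$ is equivalent to the global family $\left.1_{b}\right)$--$\left.3_{b}\right)$. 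The relations $\left.1_{o}\right)$ and $\left.2_{o}\right)$ only involve $x$ and $y$ and pass through the substitution unchanged. A short calculation then rewrites each $b_i$ explicitly as a word in $x, y$: $b_{i+1} = x^{-1}(yxy)^{i+1} x (yxy)^{-i-1} = [x, (yxy)^{-(i+1)}]$, matching the definition in the statement.

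My second move would be to discard the nilpotency-of-class-$3$ axiom. By Theorem A.4, the relations $\left.1_{b}\right)$--$\left.3_{b}\right)$ alone force each subgroup $\langle x, b_{1}, \ldots, b_{n+1}\rangle$ to be nilpotent of class $2$; hence their union $\langle x, b_{i} : i \geq 1\rangle$ is locally nilpotent of class $2$---strictly stronger than Cooper--Long's class-$3$ hypothesis. The nilpotency condition is therefore redundant and may be removed, leaving exactly the presentation asserted in Theorem A.5.

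The main obstacle is bookkeeping: one must verify that the substitution $a_{i} \leftrightarrow x b_{i+1}$ is a genuine Tietze transformation at the level of the full infinite presentation of $Q_{4,2}$, not just inside each finite $\mathrm{Stab}\left(7^{(n)}\right)$, and that no relation or generator is inadvertently dropped or duplicated. Since the substitution decouples into finite substitutions on each $\mathrm{Stab}\left(7^{(n)}\right)$ and the relations $\left.1_{o}\right), \left.2_{o}\right)$ are untouched, this difficulty reduces to the uniform application of Lemma A.2 together with the stronger nilpotency output of Theorem A.4.
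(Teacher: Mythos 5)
Your proposal is correct and follows essentially the same route as the paper: both start from Cooper--Long's Theorem 4.8 presentation, apply the Tietze substitution $b_{i+1}=x^{-1}a_{i}$ via Lemma A.2 to each $\mathrm{Stab}\left(7^{(n)}\right)$ and gather the resulting relations with $\left.1_{o}\right)$ and $\left.2_{o}\right)$, and discard the nilpotency axiom because Theorem A.4 shows class-$2$ nilpotency already follows from $\left.1_{b}\right)$--$\left.3_{b}\right)$. The only cosmetic quibble is that ``locally nilpotent of class $2$'' is just nilpotent of class $2$, since the defining commutator identities involve only finitely many elements at a time.
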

\begin{proof}

\noindent The proof of \citep[Theorem 4.8]{MR1431138} showed that
the relations in a presentation for $Q_{4,2}$ with the generators
$x,y$ can be written as a word in $x,a_{0},a_{1},\cdots,a_{n}$ in
the group $\mathrm{Stab}\left(7^{\left(n\right)}\right)$ for some
sufficiently large $n$, except for $\left.1_{o}\right)$ and $\left.2_{o}\right)$.
By substitution $b_{i+1}=x^{-1}a_{i}$, we obtain the presentation
for $Q_{4,2}$ by gathering each relation of $\mathrm{Stab}\left(7^{\left(n\right)}\right)$
in Lemma A.2 for every $n$ with $\left.1_{o}\right)$ and $\left.2_{o}\right)$.$\qedhere$

\noindent \end{proof}
\begin{lemma}

\noindent The group $Q_{4,2}$ is presented by the generators $x,y$
and the following relations:
\begin{lyxlist}{00.00.0000}
\item [{$\left.1_{o}\right)$}] \noindent $1=y^{3},$
\item [{$\left.1_{b}\right)$}] \noindent $1=x^{4}=b_{i}^{4},$ for each
integer $i\ge1,$
\item [{$\left.2_{o}'\right)$}] \noindent $1=\left[x^{2},yxy\right]=\left[b_{i}^{2},yxy\right],$
for each integer $i\ge1,$
\end{lyxlist}
\noindent where $b_{i}$ is a word in $x$ and $y$, defined as $b_{i}=\left[x,\left(yxy\right)^{-i}\right]$
for each integer $i\ge1.$

\noindent \end{lemma}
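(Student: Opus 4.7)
The plan is to show the presentation in Lemma A.6 is equivalent to the one in Theorem A.5 via Tietze transformations. The two presentations share the relations $y^3 = 1$, $x^4 = 1$, $b_i^4 = 1$ (for $i \geq 1$), and $[x^2, yxy] = 1$, so it suffices to show that, modulo these, the pair $\{2_b, 3_b\}$ of Theorem A.5 generates the same normal subgroup in the free group on $\{x, y\}$ as the family $\{[b_i^2, yxy] = 1 : i \geq 1\}$. Throughout I write $u := yxy$; a direct computation from the definition $b_i = x^{-1} u^i x u^{-i}$ yields the conjugation formula $u b_i u^{-1} = b_1^{-1} b_{i+1}$, which will serve as the bridge between the two sets of relations.

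For the forward direction $\{2_b, 3_b\} \Rightarrow [b_i^2, u] = 1$, I work inside the subgroup $\mathrm{Stab}(7^{(i)}) = \langle x, b_1, \ldots, b_{i+1}\rangle$, which is class $2$ nilpotent by Theorem A.4. Combining the class-$2$ commutator identity $(ab)^2 = a^2 b^2 [b, a]$ with $3_b$ in the form $[b_1, b_{i+1}] = b_1^2 b_i^2 b_{i+1}^2$ and $b_{i+1}^4 = 1$, one obtains $(b_1^{-1} b_{i+1})^2 = b_i^2$; together with the conjugation formula, this is exactly $[b_i^2, u] = 1$.

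For the backward direction, assume only the A.6 relations. First I derive $2_b$ from $[x^2, u] = 1$ and the definition of $b_i$. Writing $v := x^{-1} u x$, the relation rewrites as $u x = x v$, which gives $b_i = v^i u^{-i}$ and $u^{-i} x = x v^{-i}$; hence $b_i x = v^i x v^{-i} = x b_i^{-1}$, equivalently $[x, b_i] = b_i^2$. Then $x b_i^2 x^{-1} = b_i^{-2} = b_i^2$ (using $b_i^4 = 1$), so $b_i^2$ commutes with $x$; combined with $[b_i^2, u] = 1$ from A.6, $b_i^2$ is central in $\langle x, u \rangle$, and in particular commutes with every $b_j$. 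An induction on $n$ using $u b_k u^{-1} = b_1^{-1} b_{k+1}$ yields the general formula $u^n b_k u^{-n} = b_n^{-1} b_{k+n}$; applying this with $n = i$, $k = j - i$ and squaring (using $[b_{j-i}^2, u] = 1$) gives $(b_i^{-1} b_j)^2 = b_{j-i}^2$ for all $1 \leq i < j$. A short manipulation using centrality of the $b_k^2$ then yields $b_i b_j b_i^{-1} = b_i^2 b_{j-i}^2 b_j^{-1}$, and plugging this into $[b_i, b_j] = b_i^{-1} b_j^{-1} b_i b_j$ gives $[b_i, b_j] = b_i^2 b_{j-i}^2 b_j^2$, which is $3_b$.

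The main obstacle is the backward derivation of $3_b$. The A.6 relations plus the conjugation formula give only the squared identity $(b_i^{-1} b_j)^2 = b_{j-i}^2$; extracting from this the precise commutator formula $3_b$ requires first deriving $2_b$ (to conclude centrality of $b_k^2$ in $\langle x, u\rangle$), and then a careful bookkeeping of commutators using that centrality, which is what turns the squared identity into the commutator identity $[b_i, b_j] = b_i^2 b_{j-i}^2 b_j^2$.
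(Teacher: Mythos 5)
Your proposal is correct and follows essentially the same route as the paper: both directions hinge on the conjugation identity $\left(yxy\right)b_{i}\left(yxy\right)^{-1}=b_{1}^{-1}b_{i+1}$, the forward direction on the centrality of the squares $b_{k}^{2}$ in $\mathrm{Stab}\left(7^{\left(n\right)}\right)$ (the paper cites Lemma A.3 where you cite Theorem A.4), and the backward direction on first extracting $\left.2_{b}\right)$ from $1=\left[x^{2},yxy\right]$ and then recovering $\left.3_{b}\right)$ using that centrality. The only cosmetic difference is that you reach $\left.3_{b}\right)$ via the squared identity $\left(b_{i}^{-1}b_{j}\right)^{2}=b_{j-i}^{2}$ rather than the paper's direct expansion of $\left[b_{i},b_{j}\right]$; both computations use the same ingredients.
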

\begin{proof}

\noindent Suppose the presentation of Theorem A.5. Then, we need only
to show $1=\left[b_{i}^{2},yxy\right]$. Note that
\begin{align}
\left(yxy\right)b_{i}\left(yxy\right)^{-1} & =yxyx^{-1}\left(yxy\right)^{-1}\left(yxy\right)^{i+1}x\left(yxy\right)^{-i-1}=b_{1}^{-1}b_{i+1}.
\end{align}

It suffices to show $\left[b_{i}^{2},\left(yxy\right)^{-1}\right]=1$.
We deduce from (18),
\begin{align*}
 & \,\left[b_{i}^{2},\left(yxy\right)^{-1}\right]\\
= & \,b_{i}^{-2}\left(yxy\right)b_{i}^{2}\left(yxy\right)^{-1}=b_{i}^{2}\left(b_{1}^{-1}b_{i+1}\right)^{2}.
\end{align*}

Since $\left.1_{b}\right)$ and Lemma A.3 imply $\left(b_{1}^{-1}b_{i+1}\right)^{2}=\left(b_{1}b_{i+1}\right)^{2}$,
we have
\begin{align*}
 & \,b_{i}^{2}\left(b_{1}^{-1}b_{i+1}\right)^{2}\\
= & \,b_{i}^{2}b_{1}b_{i+1}^{2}b_{1}\left[b_{1},b_{i+1}\right]\\
= & b_{i}^{2}b_{1}^{2}b_{i+1}^{2}\left[b_{1},b_{i+1}\right]=1,
\end{align*}

\noindent where the second eqality follows from Lemma A.3 and the
last follows from $\left.3_{b}\right)$, $\left.1_{b}\right)$, and
Lemma A.3.

Conversely, suppose $\left.1_{o}\right)$, $\left.1_{b}\right)$,
and $\left.2_{o}'\right)$. We need to show $\left.2_{b}\right)$
and $\left.3_{b}\right)$. For $\left.2_{b}\right)$,
\begin{align*}
 & \,\left[x,b_{i}\right]\\
= & \,x^{-1}\left[\left(yxy\right)^{-i},x\right]x\left[x,\left(yxy\right)^{-i}\right]=\left[x,\left(yxy\right)^{-i}\right]^{2}=b_{i}^{2},
\end{align*}

\noindent where $\left.2_{o}'\right)$ is used. Now, we claim (14)
$\left[b_{i}^{2},x\right]=1$ for each $i$. This claim is proved
as follows:
\begin{align*}
 & \,\left[b_{i}^{-2},x\right]\\
= & \,b_{i}^{2}x^{-1}\left[\left(yxy\right)^{-i},x\right]^{2}x=b_{i}^{2}\left[x,\left(yxy\right)^{-i}\right]^{2}=b_{i}^{4}=1,
\end{align*}

\noindent from $\left.2_{o}'\right)$.

Let us finally show $\left.3_{b}\right)$. For each pair $\left(i,j\right)$
such that $i<j$,
\begin{align*}
 & \,\left[b_{i},b_{j}\right]\\
= & \,b_{i}^{-1}\left[\left(yxy\right)^{-j},x\right]\left[x,\left(yxy\right)^{-i}\right]b_{j}\\
= & \,b_{i}^{-2}\left(x^{-1}\left(yxy\right)^{i}x\right)\left(\left(yxy\right)^{j-i}x^{-1}\left(yxy\right)^{i-j}x\right)^{2}\left(x^{-1}\left(yxy\right)^{-i}x\right)b_{j}^{2}\\
= & \,b_{i}^{2}b_{j-i}^{2}b_{j}^{2},
\end{align*}

\noindent where we used $\left.1_{b}\right)$, $\left.2_{o}'\right)$,
and the claim (14).$\qedhere$

\noindent \end{proof}
\begin{lemma}

\noindent The group $Q_{4,2}$ is presented by the generators $x,y$
and the following relations:
\begin{lyxlist}{00.00.0000}
\item [{$\left.1_{o}\right)$}] \noindent $1=y^{3},$
\item [{$\left.1_{c}\right)$}] \noindent $1=x^{4}=b_{-i}^{4},$ for each
integer $i\ge1,$
\item [{$\left.2_{c}\right)$}] \noindent $1=\left[x^{2},yxy\right]=\left[b_{-i}^{2},yxy\right],$
for each integer $i\ge1,$
\end{lyxlist}
\noindent where $b_{-i}$ is a word in $x$ and $y$, defined as $b_{-i}=\left[x,\left(yxy\right)^{i}\right]$.

\noindent \end{lemma}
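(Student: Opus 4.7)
The strategy is to derive Lemma A.7 from the already-proved Lemma A.6 by showing the two presentations are Tietze-equivalent. Both presentations share the relations $y^3 = 1$, $x^4 = 1$, and $[x^2, yxy] = 1$; the only difference is that Lemma A.6 uses $b_i = [x,(yxy)^{-i}]$ in the auxiliary relations while Lemma A.7 uses $b_{-i} = [x,(yxy)^i]$. So the task reduces to showing that, modulo the shared relations, the two families of auxiliary $b$-relations are interderivable.

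The plan hinges on the free-group identity
\[
b_{-i} \;=\; (yxy)^{-i}\, b_i^{-1}\, (yxy)^i,
\]
which I would verify by expanding both commutators using the convention $[a,b]=a^{-1}b^{-1}ab$. Because this identity is a word-level equality in the free group on $x,y$, it is available on either side of the equivalence without invoking any relation. Squaring it gives
\[
b_{-i}^2 \;=\; (yxy)^{-i}\, b_i^{-2}\, (yxy)^i.
\]
Invoking $[b_i^2, yxy] = 1$ from Lemma A.6, the element $b_i^{-2}$ commutes with every power of $yxy$, so $b_{-i}^2 = b_i^{-2}$; combined with $b_i^4 = 1$, this simplifies to $b_{-i}^2 = b_i^2$. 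Consequently $b_{-i}^4 = 1$ and $[b_{-i}^2, yxy] = 1$ follow immediately. The reverse direction is symmetric, using the mirror identity $b_i = (yxy)^i\, b_{-i}^{-1}\, (yxy)^{-i}$ together with the Lemma A.7 relations.

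There is no substantive obstacle in this argument; the lemma amounts to a convenient change of auxiliary symbols (shifting from negative to positive exponents of $yxy$ inside the commutator) to align the notation with the recursive elements $x_i$ used in the body of the paper. The only care required is to confirm that the governing identity is valid at the level of free words, so that it can legitimately serve as a Tietze substitution in either direction of the equivalence.
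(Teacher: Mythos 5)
Your argument is correct and matches the paper's: the paper's proof of this lemma consists precisely of citing the conjugation identity $b_{-i}=\left(yxy\right)^{-i}b_{i}^{-1}\left(yxy\right)^{i}$ and calling the rest direct, which is exactly the free-group identity your Tietze equivalence hinges on. You have simply spelled out the routine consequences (squaring, invoking $\left[b_{i}^{2},yxy\right]=1$ and $b_{i}^{4}=1$, and the symmetric reverse direction) that the paper leaves implicit.
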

\begin{proof}

\noindent The proof is direct from the relation between $b_{i}$ and
$b_{-i}$:
\begin{align*}
b_{-i} & =\left(yxy\right)^{-i}b_{i}^{-1}\left(yxy\right)^{i}.\qedhere
\end{align*}

\noindent \end{proof}

Now we are ready to prove Lemma 2.2. let us define $x_{i}$ as a word
in $x$ and $y$ recursively:
\begin{align*}
x_{0} & :=x,\;x_{i+1}:=\left[yxy,x_{i}\right],\;0\le i.
\end{align*}

\noindent \begin{prooflemma22}

The first step of the proof is to use the set of words $\left\{ x_{0},x_{1},\cdots,x_{n}\right\} $
to rewrite the other set of words $\left\{ x,b_{-1},b_{-2},\cdots,b_{-n}\right\} $,
and vice versa.

\noindent \begin{claim2}

\noindent For each $n\ge1$, $x_{n}\in\left\langle x,b_{-1},\cdots,b_{-n}\right\rangle $
and $b_{-n}\in\left\langle x_{0},x_{1},\cdots,x_{n}\right\rangle $.

\noindent \end{claim2}
\begin{proofclaim2}

\noindent We use induction. Since $b_{-1}=x_{1}^{-1}$ by definition,
the case $n=1$ is trivial. Suppose $x_{i}\in\left\langle x,b_{-1},\cdots,b_{-i}\right\rangle $
for $1\le i\le N$. By definition,
\begin{align*}
x_{N+1} & =\left(yxy\right)^{-1}x_{N}^{-1}\left(yxy\right)x_{N},
\end{align*}

\noindent and from
\begin{align*}
\left(yxy\right)^{-1}b_{-i}\left(yxy\right) & =\left(xb_{-1}\right)^{-1}\left(xb_{-i-1}\right)=b_{-1}^{-1}b_{-i-1},
\end{align*}

\noindent we obtain $x_{N+1}\in\left\langle x,b_{-1},\cdots,b_{-N-1}\right\rangle $
by the induction hypothesis. Conversely, suppose 

\noindent $b_{-i}\in\left\langle x_{0},x_{1},\cdots,x_{i}\right\rangle $
for $1\le i\le N$. By definition, for $i>1$,
\begin{align*}
b_{-i+1}^{-1}b_{-i} & =\left(yxy\right)^{-i+1}\left[x,yxy\right]\left(yxy\right)^{i-1},
\end{align*}

\noindent and we obtain $\left(yxy\right)^{-i+1}\left[x,yxy\right]\left(yxy\right)^{i-1}\in\left\langle x_{0},x_{1},\cdots,x_{i}\right\rangle $
for $1\le i\le N$ by the induction hypothesis. From
\begin{align*}
\left(yxy\right)^{-1}x_{i}\left(yxy\right) & =x_{i}x_{i+1}^{-1},
\end{align*}

\noindent we conclude that $\left(yxy\right)^{-N}\left[x,yxy\right]\left(yxy\right)^{N}\in\left\langle x_{0},x_{1},\cdots,x_{N+1}\right\rangle $.$\qed$

\noindent \end{proofclaim2}

\noindent From here, we will prove the presentation of Lemma A.7 with
the relations:
\begin{lyxlist}{00.00.0000}
\item [{$\left.1_{o}\right)$}] \noindent $1=y^{3},$
\item [{$\left.1_{c}\right)$}] \noindent $1=x^{4}=b_{-i}^{4},$ for each
integer $i\ge1,$
\item [{$\left.2_{c}\right)$}] \noindent $1=\left[x^{2},yxy\right]=\left[b_{-i}^{2},yxy\right],$
for each integer $i\ge1,$
\end{lyxlist}
\noindent is equivalent to that of Lemma 2.2 with the relations:
\begin{lyxlist}{00.00.0000}
\item [{$\left.1_{o}\right)$}] \noindent $1=y^{3},$
\item [{$\left.1_{x}\right)$}] \noindent $1=x_{i}^{4},$ for each integer
$i\ge0,$
\item [{$\left.2_{x}\right)$}] \noindent $1=\left[x_{i}^{2},yxy\right],$
for each integer $i\ge0.$
\end{lyxlist}
$\quad\;\:$Suppose the former. Claim 2 guarantees that each element
$x_{i+1}$ is similar to an element $g_{i+1}$ in $\mathrm{Stab}\left(7^{\left(i\right)}\right)$
by some power of $yxy$ as in the proof of $\mathrm{Lemma\;A.7}$.
By Lemma A.3, the exponent of $\mathrm{Stab}\left(7^{\left(i\right)}\right)$
is 4, which implies $\left.1_{x}\right)$. Moreover, we have $g_{i+1}^{2}\in Z\left(\mathrm{Stab}\left(7^{\left(i\right)}\right)\right)$
by Lemma A.3. Thus, $\left.2_{x}\right)$ follows from $\left.2_{c}\right)$
by a repetitive application of the commutator identity
\begin{align*}
\left[a,bc\right] & =\left[a,c\right]c^{-1}\left[a,b\right]c.
\end{align*}

Conversely, suppose $\left.1_{x}\right)$ and $\left.2_{x}\right)$.
Before deriving the relations $\left.1_{c}\right)$ and $\left.2_{c}\right)$,
we claim:

\noindent \begin{claim3}

\noindent The relations $\left.1_{x}\right)$ and $\left.2_{x}\right)$
imply that
\begin{lyxlist}{00.00.0000}
\item [{$\left.3_{x}\right)$}] \noindent $\left[x_{i},x_{j}\right]\in\left\langle x_{0}^{2},x_{1}^{2},\cdots,x_{j}^{2}\right\rangle $,
for each pair $\left(i,j\right)$ such that $0\le i,j$,
\item [{$\left.4_{x}\right)$}] \noindent $\left[x_{i}^{2},x_{j}\right]=1$,
for each pair $\left(i,j\right)$ such that $0\le i,j$.
\end{lyxlist}
\noindent \end{claim3}
\begin{proofclaim3}

\noindent We use induction. Suppose $\left.3_{x}\right)$ and $\left.4_{x}\right)$
hold for pairs $\left(i,j\right)$ such that 

\noindent $0\le i,j\le N$. We first show $\left.3_{x}\right)$. When
$i\le N$, we deduce
\begin{equation}
\left[x_{i},x_{N+1}\right]=\left[x_{i},x_{N}\right]x_{N}^{-2}\left(yxy\right)^{-1}\left(x_{N-1}^{-1}\cdots x_{i+1}^{-1}x_{i}^{-1}\right)x_{N}\left(yxy\right)x_{i}x_{N+1},
\end{equation}

\noindent which is shown by the definition of commutators:
\begin{align*}
 & \,\left[x_{i},x_{N+1}\right]\\
= & \,x_{i}^{-1}x_{N}^{-1}\left(yxy\right)^{-1}x_{N}\left(yxy\right)x_{i}x_{N+1}\\
= & \,\left[x_{i},x_{N}\right]x_{N}^{-1}x_{i}^{-1}\left(yxy\right)^{-1}x_{N}\left(yxy\right)x_{i}x_{N+1}\\
= & \,\left[x_{i},x_{N}\right]x_{N}^{-1}x_{i+1}^{-1}\left(yxy\right)^{-1}x_{i}^{-1}x_{N}\left(yxy\right)x_{i}x_{N+1}\\
= & \,\left[x_{i},x_{N}\right]x_{N}^{-2}\left(yxy\right)^{-1}\left(x_{N-1}^{-1}\cdots x_{i+1}^{-1}x_{i}^{-1}\right)x_{N}\left(yxy\right)x_{i}x_{N+1}.
\end{align*}

In the right hand side of (19), by using the induction hypothesis
$\left.4_{x}\right)$ for each $j$ such that $i\le j\le N-1$, we
see
\[
x_{j}^{-1}x_{N}=x_{N}x_{j}^{-1}\left[x_{j}^{-1},x_{N}\right]=x_{N}x_{j}^{-1}\left[x_{j},x_{N}\right],
\]

\noindent so we pull $x_{N}$ before $\left(yxy\right)$ leftward,
leaving a product of commutators
\begin{align*}
\left[x_{N-1},x_{N}\right]\cdots\left[x_{i+1},x_{N}\right]\left[x_{i},x_{N}\right],
\end{align*}

\noindent which is included in the subgroup $\left\langle x_{0}^{2},x_{1}^{2},\cdots,x_{j}^{2}\right\rangle $
by $\left.3_{x}\right)$. Thus, by $\left.4_{x}\right)$ there exists
an element $C_{N}\in\left\langle x_{0}^{2},x_{1}^{2},\cdots,x_{N}^{2}\right\rangle $
such that
\begin{equation}
\left[x_{i},x_{N+1}\right]=C_{N}\left(yxy\right)^{-1}\left(x_{N}^{-1}x_{N-1}^{-1}\cdots x_{i+1}^{-1}x_{i}^{-1}\right)\left(yxy\right)x_{i}x_{N+1}.
\end{equation}

Finally, we simplify the right hand side of (20) as
\begin{align*}
 & \,C_{N}\left(yxy\right)^{-1}x_{N}^{-1}x_{N-1}^{-1}\cdots x_{i+1}^{-1}x_{i}^{-1}\left(yxy\right)x_{i}x_{N+1}\\
= & \,C_{N}\left(yxy\right)^{-1}x_{N}^{-1}x_{N-1}^{-1}\cdots x_{i+1}^{-1}\left(yxy\right)x_{i+1}x_{i}^{-1}x_{i}x_{N+1}\\
= & \,C_{N}\left(yxy\right)^{-1}x_{N}^{-1}x_{N-1}^{-1}\cdots x_{i+2}^{-1}\left(yxy\right)x_{i+2}x_{i+1}^{-1}x_{i+1}x_{N+1}\\
= & \,C_{N}\left(yxy\right)^{-1}x_{N}^{-1}\left(yxy\right)x_{N}x_{N+1}\\
= & \,C_{N}\left(yxy\right)^{-1}\left(yxy\right)x_{N+1}^{2}\\
= & \,C_{N}x_{N+1}^{2},
\end{align*}

\noindent which means $\left[x_{i},x_{N+1}\right]\in\left\langle x_{0}^{2},x_{1}^{2},\cdots,x_{N}^{2},x_{N+1}^{2}\right\rangle $
and establishes $\left.3_{x}\right)$ when $j=N+1$. The case $i=N+1$
also follows from the commutator identity $\left[x_{N+1},x_{j}\right]=\left[x_{j},x_{N+1}\right]^{-1}$.

For $\left.4_{x}\right)$, for each $i$ such that $0\le i\le N+1$,
\begin{align}
\left[x_{N+1},x_{i}^{2}\right] & =1=\left[x_{i}^{2},x_{N+1}^{2}\right]
\end{align}

\noindent directly follows from the definition of $x_{N+1}$, $\left.2_{x}\right)$,
and $\left.4_{x}\right)$ up to $i,j\le N$. We need to show that
\begin{equation}
\left[x_{N+1}^{2},x_{i}\right]=1.
\end{equation}

By the definition of the commutators we deduce that
\begin{align*}
 & \,\left[x_{N+1}^{2},x_{i}\right]\\
= & \,x_{N+1}^{-2}x_{i}^{-1}x_{N+1}^{2}x_{i}=x_{N+1}^{-1}\left[x_{N+1},x_{i}\right]x_{i}^{-1}x_{N+1}x_{i},
\end{align*}

\noindent and by the fact $\left[x_{N+1},x_{i}\right]\in\left\langle x_{0}^{2},x_{1}^{2},\cdots,x_{N}^{2},x_{N+1}^{2}\right\rangle $
and (21), we move the commutator $\left[x_{N+1},x_{i}\right]$ leftwards:

\noindent 
\begin{align*}
 & \,x_{N+1}^{-1}\left[x_{N+1},x_{i}\right]x_{i}^{-1}x_{N+1}x_{i}\\
= & \,\left[x_{N+1},x_{i}\right]x_{N+1}^{-1}x_{i}^{-1}x_{N+1}x_{i}\\
= & \,\left[x_{N+1},x_{i}\right]\left[x_{N+1},x_{i}\right].
\end{align*}

Because the induction hypothesis $\left.4_{x}\right)$ and (21) imply
that the subgroup $\left\langle x_{0}^{2},x_{1}^{2},\cdots,x_{N}^{2},x_{N+1}^{2}\right\rangle $
is abelian, we obtain (22) from $\left.1_{x}\right)$. This proves
Claim 3.$\qed$

\noindent \end{proofclaim3}

Finally, we conclude the proof of Lemma 2.2. From $\left.4_{x}\right)$
of Claim 3, each element $x_{j}^{2}$ is central in $\left\langle x_{0},x_{1},\cdots,x_{i+1}\right\rangle $.
Thus, for any element $g\in\left\langle x_{0},x_{1},\cdots,x_{i+1}\right\rangle $,
repetitively applying $\left.3_{x}\right)$ of Claim 3, we get a normal
form for $g$:
\begin{align*}
g & =C\left(g\right)x_{0}^{j_{0}}x_{1}^{j_{1}}\cdots x_{i+1}^{j_{i+1}},
\end{align*}

\noindent where $C\left(g\right)\in\left\langle x_{0}^{2},x_{1}^{2},\cdots,x_{i+1}^{2}\right\rangle $
and $j_{k}\in\left\{ 0,1\right\} $ for $0\le k\le i+1$. Thus, we
obtain $g^{4}=1$ and $g^{2}\in\left\langle x_{0}^{2},x_{1}^{2},\cdots,x_{i+1}^{2}\right\rangle $,
which establishes $\left.1_{c}\right)$ and $\left.2_{c}\right)$
by invoking Claim 2.$\qed$

\noindent \end{prooflemma22}
\end{appendices}

\begin{spacing}{0.9}
\bibliographystyle{amsplain}
\phantomsection\addcontentsline{toc}{section}{\refname}\bibliography{bibgen}

\end{spacing}

$ $

{\small{}Donsung Lee; \href{mailto:disturin@snu.ac.kr}{disturin@snu.ac.kr}}{\small\par}

{\small{}Department of Mathematical Sciences and Research Institute
of Mathematics,}{\small\par}

{\small{}Seoul National University, Gwanak-ro 1, Gwankak-gu, Seoul,
South Korea 08826}{\small\par}

\clearpage{}

\pagebreak{}

\pagenumbering{arabic}

\renewcommand{\thefootnote}{A\arabic{footnote}}
\renewcommand{\thepage}{A\arabic{page}}
\renewcommand{\thetable}{A\arabic{table}}
\renewcommand{\thefigure}{A\arabic{figure}}

\setcounter{footnote}{0} 
\setcounter{section}{0}
\setcounter{table}{0}
\setcounter{figure}{0}
\end{document}